\numberwithin{equation}{section}
\newtheorem{theo}{Theorem}[section]
\newtheorem{lemm}[theo]{Lemma}
\newtheorem{prop}[theo]{Proposition}
\newtheorem{defi}[theo]{Definition}
\newcommand{\ii}{{\mathrm{i} }}
\newcommand{\asf}{\,{\sf a}}
\newcommand{\fsf}{\,{\sf f}}
\newcommand{\nn}{\nonumber}
\newcommand{\ce}{\mathcal{E}}
\newcommand{\dd}{\mathrm{d}}
\newcommand{\ca}{\mathcal{A}}
\newcommand{\cl}{\mathcal{L}}
\newcommand{\cb}{\mathcal{B}}
\newcommand{\pq}{\IC\mathrm{P}^1_{q}}  
\newcommand{\pqn}{\IC\mathrm{P}^n_{q}}  
\newcommand{\Apq}{{\mathcal O}(\IC\mathrm{P}^1_{q})}  
\newcommand{\cu}{\mathcal{U}}        
\newcommand{\SU}{\mathrm{SU}_q(2)}  
\newcommand{\ASU}{{\mathcal O}(\mathrm{SU}_q(2))}  
\newcommand{\sq}{\mathrm{S}^2_{q}}  
\newcommand{\Asq}{\ca(\mathrm{S}^2_{q})}  
\newcommand{\su}{\cu_q(\mathrm{su}(2))}  
\newcommand{\cop}{\Delta}           
\newcommand{\co}[2]{#1_{(#2)}}      
\newcommand{\hs}[2]{\left\langle #1,#2\right\rangle}  
\newcommand{\ket}[1]{\left | #1 \right\rangle }
\newcommand{\lt}{{\triangleright}}    
\newcommand{\rt}{{\triangleleft}}
\newcommand{\IC}{{\mathbb C}} 
\newcommand{\IR}{{\mathbb R}} 
\newcommand{\IN}{{\mathbb N}} 
\newcommand{\IT}{{\mathbb T}} 
\newcommand{\IZ}{{\mathbb Z}} 
\DeclareMathOperator{\Aut}{Aut}
\DeclareMathOperator{\Hom}{Hom}
\DeclareMathOperator{\id}{id}       
\DeclareMathOperator{\Mat}{Mat}       
\DeclareMathOperator{\U}{U}       
\DeclareMathOperator{\tr}{tr}       
\DeclareMathOperator{\qtr}{tr_q}       
\newcommand{\beqa}{\begin{align}}
\newcommand{\eeqa}{\end{align}}
\newcommand{\beq}{\begin{equation}}
\newcommand{\eeq}{\end{equation}}
\newcommand{\dol}{\partial}
\newcommand{\dolb}{\overline{\dol}}
\newcommand{\abs}[1]{\left|#1\right|}
\newcommand{\mn}{\abs{n}}
\newcommand{\qp}{{\sf{P}}}
\newcommand{\qpp}{{\sf{P}}^{\Psi}}
\newcommand{\qpn}{{\sf{P}}^{\Phi}}
\newcommand{\bz}{B_{0}}
\newcommand{\bp}{B_{+}}
\newcommand{\bm}{B_{-}}
\begin{document}

\keywords{Noncommutative sigma-models, Self-duality equations, Noncommutative instantons, Noncommutative complex and holomorphic structures, Twisted cyclic and Hochschild cocycles, Twisted Hochschild positivity, Quantum projective line.}

\title{Twisted sigma-model solitons on \\ ~ \\ the quantum projective line}

\author{Giovanni Landi}
\address{Universit\`{a} di Trieste, Trieste, Italy 
and I.N.F.N. Sezione di Trieste, Trieste, Italy. 
email: landi@units.it}

\date{13 July 2016}

\begin{abstract}
On the configuration space of projections in a noncommutative algebra, and for an automorphism 
of the algebra, we use a twisted Hochschild cocycle 
for an action functional, and a twisted cyclic cocycle 
for a topological term. The latter is  
Hochschild-cohomologous to the former and positivity in twisted Hochschild cohomology results into a lower bound for the action functional. While the equations for the critical points are rather complicate, the use of the positivity and the bound by the topological term leads to self-duality equations (thus yielding twisted noncommutative sigma-model solitons, or instantons). We present explicit non-trivial solutions on the quantum projective line.

\bigskip\bigskip\bigskip

\centerline{\emph{A Beppe con affetto e gratitudine per il suo settantesimo compleanno}}

\end{abstract}

\maketitle

\tableofcontents

\parskip = 1.2 ex

\section{An introduction by way of a brief recalling}
Noncommutative analogues of non-linear sigma-models --- or noncommutative harmonic maps, or even minimally embedded surfaces --- were introduced in \cite{DKL00,DKL03}. 
With a noncommutative {\em source} space one could also consider a {\em target} space made of two points. While the commutative theory would be trivial, the noncommutative counterpart 
becomes a theory of projections in (matrices with entries in) the source algebra, and shows remarkable and far from trivial properties. In particular a natural action functional led to self-duality equations for projections in the source algebra, with solutions having non-trivial topological content thus leading to noncommutative solitons. 

An ordinary non-linear sigma-model is a field theory whose 
configuration space consists of maps $X$ from
the source space, a Riemannian manifold $(\Sigma, g)$  which 
we take to be compact and
orientable, to a target space, an other Riemannian manifold $(M, G)$.
One has an action functional --- the energy of the maps --- given in local coordinates by
$$
S[X] = \frac{1}{2\pi} \int_{\Sigma}
\sqrt{g} ~g^{\mu\nu}\,G_{ij}(X)\, \partial_{\mu}X^{i}\,\partial_{\nu}X^{j} 
\, \dd x ,
$$
where $g=\det(g_{\mu\nu})$ and $g^{\mu\nu}$ is the 
inverse of $g_{\mu\nu}$; moreover $\mu, \nu = 1, \dots, \dim\Sigma$, and $i, j = 1, \dots, 
\dim M$. 
The stationary points of this action functional 
are harmonic 
maps from $\Sigma$ to
$M$ and describe minimal surfaces embedded in $M$.

When $\Sigma$ is two dimensional, the action functional $S[X]$ above is conformally 
invariant, that is to say 
it is left invariant by any
rescaling of the metric $g \to g \, e^\phi$, with $\phi$ 
any map
from $\Sigma$ to $\IR$. As a consequence, the action only depends
on the conformal class of the metric and may be rewritten using a 
complex structure
on $\Sigma$ as
$$
S[X] =  \frac{1}{\pi}\int_{\Sigma}\,G_{jk}(X)\,\partial
X^{j}\wedge\overline{\partial}X^{k} ,
$$
where $\partial=\dd z \, \partial_{z} $ and
$\overline{\partial}=\dd \overline{z} \, \partial_{\overline{z}}$, with $z$  
a suitable local complex
coordinate.

The noncommutative generalization proposed in \cite{DKL00,DKL03} started with the $*$-algebras $\ca$ and $\cb$ of complex valued smooth functions defined 
respectively on $\Sigma$ and $M$. Embeddings $X$ of $\Sigma$ into $M$ are then in one to one correspondence with $*$-algebra morphisms $\pi$ from $\cb$ to $\ca$, the correspondence 
being simply given by pullback, $\pi_{X}(f)=f\circ X$.

All of this makes sense for fixed not necessarily commutative
algebras $\ca$ and $\cb$ (over $\IC$ and for simplicity taken to be unital).
Then, the configuration space consists of all $*$-algebra morphisms
from $\cb$ (the target algebra) to $\ca$ (the source algebra). To 
define an action functional one needs
noncommutative  generalizations of the
conformal and Riemannian geometries. 
Following 
\cite[Section VI.2]{co94}, 
conformal structures can be understood within the framework of positive 
Hochschild cohomology. In the
commutative case  the tri-linear map $\Phi: \ca\otimes \ca\otimes \ca \to \IR$ defined by
\beq\label{trilinear}
\Phi(f_{0},f_{1},f_{2})= \frac{\ii}{\pi} \int_{\Sigma}f_{0}\partial
f_{1}\wedge\overline{\partial}f_{2}
\eeq is an extremal element of the space of positive 
Hochschild cocycles \cite{CoCu} and
belongs to the Hochschild cohomology class of the cyclic cocycle 
$\Psi$ given by
\beq\label{trilinearbis}
\Psi(f_{0},f_{1},f_{2})= \frac{\ii}{2\pi}\int_{\Sigma}f_{0} \dd f_{1} 
\wedge \dd f_{2}.
\eeq
Clearly, both \eqref{trilinear} and \eqref{trilinearbis} still make 
perfectly sense for a general
noncommutative algebra $\ca$. One can say that
$\Psi$ allows to integrate 2-forms in dimension 2, 
$$
\frac{\ii}{2\pi} \int
a_{0} \dd a_{1}\dd a_{2}=\Psi(a_{0},a_{1},a_{2}) 
$$
so that it is a metric independent
object, whereas $\Phi$ defines a suitable scalar product
$$
\langle a_{0}\dd a_{1},b_{0}\dd b_{1}
\rangle=\Phi(b_{0}^{*}a_{0},a_{1},b_{1}^{*})
$$
on the space of 1-forms and thus
depends on the conformal class of the metric. Furthermore, this 
scalar product is positive and its relation to the cyclic cocycle $\Psi$ allows one to 
prove various
inequalities involving topological quantities. In particular, in typical models one 
gets a topological lower bound for the
action which is a two dimensional analogue of the inequality in 
four dimensional Yang-Mills theory leading to self-duality equations.

As mentioned, the simplest example of a target space is that of a 
finite space made of two points $M = \{1,2\}$. Any continuous
map from a connected surface to a discrete space is constant and the resulting
(commutative) theory would be trivial. This is no longer 
true if the source is a
noncommutative space and one has, in general, lots of such maps (i.e. algebra
morphisms).
Now, the algebra of functions over $M = \{1,2\}$ is just $\cb = 
\IC^{2}$ and any
element $f\in\cb$ is a couple of complex numbers $(f_1,f_2)$ with 
$f_a=f(a)$, the value of $f$
at the point $a$. As a vector space $\cb$ is generated by 
a hermitian idempotent (a
projection), $e^2 = e^* = e$, that is the function defined by $e(1)=1$ and $e(2)=0$. 
Thus, any $*$-algebra morphism $\pi$ from $\cb$ to $\ca$ is given by a 
projection ${\qp}=\pi(e)$ in $\ca$. 

With a given Hochschild cocycle $\Phi$ standing for the conformal structure,
the action functional on the collection of all projection in $\ca$ can be taken to be
$$
S[\qp]=\Phi(1,\qp,\qp)   .
$$
And as already mentioned, from general consideration of positivity in Hochschild cohomology 
this action  is bounded by a topological term of the form 
$$
\mbox{Top}(\qp) = \Psi(\qp,\qp,\qp)   ,
$$
for $\Psi$ a cyclic cocycle which is Hochschild-cohomologous to the Hochschild cocycle $\Phi$. 

In the initial papers \cite{DKL00,DKL03} there were already 
models for the source algebra be the algebra 
$\ca=C^\infty(\IT^2_\theta)$ of the noncommutative torus. While the equations for the critical points were rather complicate, the use of the positivity and the bound by the topological term lead to self-duality equations for projections in $C^\infty(\IT^2_\theta)$, with solutions being topological non-trivial (thus eligible to be called noncommutative solitons, or noncommutative instantons). The construction relied on complex structures and the existence of holomorphic structures for projective modules over noncommutative spaces. The projections in the source algebra were coming from Morita equivalence bimodules. 
Some further generalization were given in \cite{DLL15} where  several results from time-frequency analysis and Gabor analysis were used to construct new classes of sigma-model solitons over the Moyal plane and over noncommutative tori. And some related results are in \cite{MR11} 

In the present paper we go beyond the scheme above by considering a twisted Hochschild cocycle $\Phi_\sigma$ for the action functional, and a twisted cyclic cocycle $\Psi_\sigma$
for the `topological' term. This will still be Hochschild-cohomologous to the twisted Hochschild cocycle and from positivity in twisted Hochschild cohomology there is again a bound on the action. Now, the equations for the critical points will be even more complicate, but again the use of the positivity and the bound by the topological term will lead to self-duality equations for projections (thus leading to twisted noncommutative solitons, or instantons). 

As we shall explain in details, for a projection $\qp \in \Mat_{N}(\ca)$, with $\ca$ the `source space' algebra, we shall have the action functional, the energy of the projection $\qp$,  
$$
S_\sigma(\qp) = \frac{1}{2} \int_h \qtr(\qp\, (\star \dd \qp) \wedge \dd \qp)   ,
$$
as well as a `topological' term (the $q$-index of a Dirac operator)
$$
\mbox{Top}_\sigma(\qp) = \frac{1}{2} \int_h \qtr(\qp\, \dd \qp \wedge \dd \qp)   .
$$
Here $\qtr$ is a `quantum' trace, $\star$ is a Hodge operator and $h$ is a positive twisted state 
on $\ca$ with modular automorphism $\sigma \in \Aut(\ca)$. Both functionals above are twisted by this modular automorphism. There is still a bound
$$
S_\sigma(\qp) \geq | \mbox{Top}_\sigma(\qp) | \,
$$
 with equality occurring for the projector $\qp$ satisfying equations
$$
 \star \qp\, \dd \qp = \pm \, \qp\, \dd \qp   .
$$
We shall exhibit explicit 
 solutions of these self-dual or anti-self-dual equations while naming them noncommutative 
 twisted solitons (or twisted sigma-model anti/instantons).  

In particular, 
we present models for $\ca=\Apq$ the coordinate algebra of the quantum projective line $\pq$ and $\qp \in \Mat_{N}(\Apq)$. 
Now $h$ is the restriction to $\pq$ of the Haar state of the quantum group $\SU$. 
All constructions rely on complex structures and the existence of holomorphic structures for projective modules over 
the quantum projective line $\pq$ as introduced and developed in \cite{KLvS}. The topological term is the $q$-index of a Dirac operator on $\pq$, and the duality equations are also written as
$$
 \qp \, \dol \qp = 0  \qquad \mbox{or} \qquad  \qp \, \dolb \qp = 0   ,
 $$
 for a complex structure $\dol$, $\dolb$ on $\pq$. As mentioned, we shall give explicit solutions of these equations. 
The results of \cite{KLvS} on complex and holomorphic structures on  
$\pq$ were generalised in \cite{KM11,KM11b} to the quantum projective spaces $\pqn$. These results when used for the projections on $\pqn$ constructed in \cite{DL10,DD10} would lead to a direct, if computationally intricate, generalisation of the results of the present paper.

\subsubsection*{Acknowledgments} 
Part of this work was done during a visit at the Tohoku Forum for Creativity in Sendai. I am grateful to Yoshi Maeda for the kind invitation and to him, to Ursula Carow--Watamura and to Satoshi Watamura for the great hospitality in Sendai. Francesco D'Andrea read a first version of the paper and made useful suggestions. 

\section{The geometry of the quantum projective line}\label{se:qhb}

The quantum projective line $\pq$
is the standard Podle\'s sphere $\sq$ of \cite{Po87} --- the quotient of the
sphere $\mathrm{S}^3_q \simeq\SU$ for an action of the abelian
group $\U(1)$ ---, 
with additional structure. This is the well-known
quantum principal $\U(1)$-bundle over $\sq$, whose total space is the
manifold of the quantum group $\SU$ as we now briefly recall. 
In the following we use the `$q$-number', defined for $q \neq 1$ and any $x \in \IR$ as
$$
[x] = [x]_q := \frac{q^x - q^{-x}}{q - q^{-1}}   .
$$

\subsection{The quantum sphere $\mathrm{S}^3_q$}\label{q3s}
The manifold $\mathrm{S}^3_q$ is though of as the 
manifold `underlying' the quantum group $\SU$. The coordinate algebra $\ASU$ of the latter \cite{wor87} is the $*$-algebra generated by elements $a$ and $c$ subject to relations
\begin{align*}
a\,c &= q\,c\,a \quad \mbox{and} \quad c^*\,a^* =q\,a^*\,c^*  \ ,     
\quad a\,c^* =q\,c^*\,a \ 
\quad \mbox{and} \quad c\,a^* =q\,a^*\,c  \ , \nn \\[4pt] 
c\,c^* &= c^*\,c \quad \mbox{and} \quad a^*\,a+c^*\,c =
a \, a^*+q^{2}\,c \, c^* =1 \ . 
\end{align*}
The deformation parameter
$q\in\IR$ can be restricted to the interval $0<q<1$ without loss of
generality.  
The well known Hopf algebra structure for $\ASU$, in terms of a coproduct $\Delta$, an antipode $S$ and a counit $\epsilon$ is not explicitly needed in the following. 

On the other hand, 
the algebra of symmetries, 
the quantum universal enveloping algebra $\su$, is the Hopf $*$-algebra
generated, as an algebra, by four elements $K,K^{-1},E,F$ such that 
$K\,K^{-1}=1=K^{-1}\,K$ and with relations
$$
K^{\pm\,1}\,E = q^{\pm\,1}\,E\,K^{\pm\,1} \ , \quad 
K^{\pm\,1}\,F = q^{\mp\,1}\,F\,K^{\pm\,1} \quad \mbox{and} \quad
[E,F]  = \frac{K^{2}-K^{-2}}{q-q^{-1}} \ . 
$$
The $*$-structure is 
$K^* = K$, $E^* = F$ and $F^* = E$. 
And the Hopf algebra structure (used later on) is given by the coproduct $\Delta$, 
the antipode $S$, and the counit $\epsilon$ defined by
$$
\begin{array}{cc}
\Delta(K^{\pm\,1})=K^{\pm\,1}\otimes K^{\pm\,1} \ ,
\quad \Delta(E)=E\otimes K+K^{-1}\otimes E \ , \quad
\Delta(F)=F\otimes K+K^{-1}\otimes F \ , \nn\\[4pt]
S(K)  = K^{-1} \ ,
\quad S(E)  = -q\,E \ , \quad S(F)  = -q^{-1}\,F \ , \nn\\[4pt]
\epsilon(K) = 1 \ , \quad \epsilon(E) = \epsilon(F) = 0 \ .
\end{array}
$$
There is a bilinear pairing between $\su$ and $\ASU$ given on
generators by
\begin{align*}
&\langle K^{\pm\,1}, a\rangle = q^{\mp\, 1/2} \quad \mbox{and} \quad
\langle K^{\pm\,1}, a^*\rangle = q^{\mp\, 1/2} \ , \nn\\[4pt]
&\langle E,c\rangle = 1 \quad \mbox{and} \quad \langle
F,c^*\rangle = -q^{-1} \ ,
\end{align*}
with all other couples of generators pairing to zero. 
This pairing makes $\su$ a subspace of the linear dual of $\ASU$.
And it yields canonical left and right $\su$-module algebra structures (actions) 
on $\ASU$ such that~\cite{wor87}
$$
\hs{g}{h \lt x} := \hs{g\,h}{x}\quad \mbox{and} \quad
\hs{g}{x \rt  h} := \hs{h\,g}{x}   ,
$$
for all $g,h \in \su,\ x \in \ASU$. 
The right and left actions are given by
$$
h \lt x := \co{x}{1} \,\hs{h}{\co{x}{2}} \quad \mbox{and} \quad
x \rt  h := \hs{h}{\co{x}{1}}\, \co{x}{2}   
$$
in the Sweedler notation for the coproduct.
These right and left actions commute:
\begin{align*}
(h \lt x) \rt  g  = \left(\co{x}{1} \,\hs{h}{\co{x}{2}}\right) \rt g 
= \hs{g}{\co{x}{1}} \,\co{x}{2}\, \hs{h}{\co{x}{3}} 
= h \lt \left(\hs{g}{\co{x}{1}}\, \co{x}{2}\right)  = h \lt (x \rt g) \ .
\end{align*}
Since the pairing satisfies
$\hs{S(h)^*}{x} = \overline{\hs{h}{x^*}}$,
the $*$-structure is compatible with 
both actions, that is, for all $h \in \su, \ x \in \ASU$ it holds that 
$$
h \lt x^*  =  \big(S(h)^* \lt x\big)^*  \quad \mbox{and} \quad
x^* \rt  h  =  \big(x \rt S(h)^*\big)^*
$$
For later use we list the left action.  With $s\in\IN$ a positive integer, it is worked out to be:\begin{align}\label{lact}
K^{\pm\,1}\triangleright a^{s}  = q^{\mp\,\frac{s}{2}}\,a^{s}   , 
&\qquad 
K^{\pm\,1}\triangleright c^{s}  = q^{\mp\,\frac{s}{2}}\,c^{s}   , \nn\\[4pt] 
K^{\pm\,1}\triangleright a^{*}\,^{s} = q^{\pm\,\frac{s}{2}}\,a^{*}\,^{s}   ,
&\qquad 
K^{\pm\,1}\triangleright c^{*}\,^{s} = q^{\pm\,\frac{s}{2}}\,c^{*}\,^{s} \ , \nn\\[4pt]
F\triangleright a^{s}  = 0   ,
&\qquad 
F\triangleright c^{s}  = 0   ,  \nn\\[4pt]
F\triangleright a^{*}\,^{s}  = q^{(1-s)/2} \,[s]\, c\, a^{*}\,^{ s-1} \ ,
&\qquad 
F\triangleright c^{*}\,^{s}  = -q^{-(1+s)/2}\, [s]\, a\, c^{*}\,^{s-1} \ , \nn \\[4pt]
E\triangleright a^{s}  = -q^{(3-s)/2}\, [s]\, a^{s-1}\, c^{*}   ,
&\qquad 
E\triangleright c^{s}  = q^{(1-s)/2}\, [s] \, c^{s-1}\, a^*   , \nn\\[4pt] 
E\triangleright a^{*}\,^{ s}  = 0 \ ,
&\qquad 
E\triangleright c^{*}\,^{ s}  = 0 \ .
\end{align}

\noindent
We also need the irreducible finite dimensional representations $\pi_J$ of $\su$.
They are labelled by nonnegative
half-integers $J\in \tfrac{1}{2}\IN$ (the spin) and given by (c.f. \cite[Sect.3.2.3]{KS98}).
\begin{align}\label{eq:uqsu2-repns}
\pi_J(K)\,\ket{J,m} &= q^m \,\ket{J,m},
\nn \\[4pt]
\pi_J(E)\,\ket{J,m} &= \sqrt{[J-m][J+m+1]} \,\ket{J,m+1}, \nn \\[4pt]
\pi_J(F)\,\ket{J,m} &= \sqrt{[J-m+1][J+m]} \,\ket{J,m-1},
\end{align}
where the vectors $\ket{J,m}$, for $m = J, J-1,\dots, -J+1, -J$, form
an orthonormal basis for the $(2J+1)$-dimensional, 
irreducible $\su$-module $V_J$.
With respect to the hermitian scalar product on $V_J$ for which the vectors $\ket{J,m}$ are
orthonormal, $\pi_J$ is a $*$-representation.   

\subsection{The quantum projective line $\pq$}\label{qdct}
Next we describe the well know $\U(1)$-principal bundle over the sphere $\sq$, 
with total space $\SU$, a quantum homogeneous space \cite{BM93}.  
The abelian group $\U(1)= \{ z\in \IC \, | \, z \,z^* = 1\}$ acts on the algebra $\ASU$,  with a map
$\alpha : \U(1) \longrightarrow \Aut\big(\ASU\big)$
defined on generators by  
\begin{align}\label{rco}
\alpha_z(a) = a \, z   ,  \quad  
\alpha_z(a^*) = a^*\, z^{*} \, \qquad \mbox{and}& \qquad
 \alpha_z(c) = c\,  z   , \quad  
\alpha_z(c^*) = c^*\, z^{*} \ , 
\end{align}
extended as an algebra map, $\alpha_z(x\, y) =
\alpha_z(x)\,\alpha_z(y)$ for $x,y\in \ASU$ and $z\in \U(1)$. 
The coordinate algebra $\Asq$ is then the
subalgebra of \emph{invariant} elements in $\ASU$: 
$$
\Asq := \ASU^{\U(1)} := \big\{ x \in\ASU  \; \big| \; \alpha_z(x)= x
\big\} \ .
$$
As a set of generators for $\Asq$ one may take 
$$
B_{-} := a\,c^* \ , \qquad 
B_{+} := c\,a^* \quad \mbox{and} \quad 
B_{0} := c\,c^* \ ,
$$
for which one finds relations
\begin{align*}
B_{-}\,B_{0} &= q^{2}\, B_{0}\,B_{-} \quad \mbox{and} \quad
B_{+}\,B_{0} = q^{-2}\, B_{0}\,B_{+} \ , \\[4pt]
B_{-}\,B_{+} &= q^2\, B_{0} \,\big( 1 - q^2\, B_{0} \big) \qquad
\mbox{and} \qquad B_{+}\,B_{-}= B_{0} \,\big( 1 - B_{0} \big) \ ,
\end{align*}
and $*$-structure $(B_{0})^*=B_{0}$ and $(B_{+})^*= B_{-}$.
The algebra inclusion $\Asq\hookrightarrow\ASU$ is a quantum principal
bundle \cite{BM93} and can be endowed with compatible non universal calculi, a
construction that we shall illustrate later on.

In \S\ref{se:cals2} we will recall the complex structure on
the quantum two-sphere $\sq$ for the unique two-dimensional covariant
calculus on it. This will transform the sphere $\sq$ into a quantum
riemannian sphere or quantum projective line $\pq$. Having this in
mind, with a slight abuse of `language' we will speak of $\pq$ rather
than $\sq$ from now on.

%

\subsection{Equivariant line bundles on $\pq$}\label{se:avb} 
Let $\rho : \U(1) \to V$ be a representation of $\U(1)$ on a
finite-dimensional complex vector space $V$. 
The space defined by
$$
\ASU \boxtimes_\rho V := \big\{ \varphi  \in  \ASU \otimes V \, \big| \,  
(\alpha \otimes \id)\varphi = \big((\id \otimes \rho^{-1} )\big)
\varphi \big\} \ ,
$$
where $\alpha$ is the action \eqref{rco} of $\U(1)$ on $\ASU$,
is the space of $\rho$-equivariant elements, clearly an $\ca(\pq)$-bimodule. We shall think of
it as the module of sections of the vector bundle associated with the
quantum principal $\U(1)$-bundle on $\pq$ via the representation
$\rho$. There is a natural $\SU$-equivariance, in that the left
coaction $\Delta$ of $\ASU$ on itself extends in a natural way to a
left coaction on $\ASU \boxtimes_\rho V$ given by
$$
\Delta^\rho= \Delta \otimes \id\,:\, \ASU \boxtimes_\rho V
~\longrightarrow~ \ASU \otimes \big(\ASU \boxtimes_\rho V\big) \ .
$$

Now, the irreducible representations of $\U(1)$ are labelled by an integer
$n\in\IZ$. If $C_n\simeq\IC$ is the irreducible one-dimensional left
$\U(1)$-module of weight $n$, they are given by  
\beq\label{ircore}
\rho_n \,:\, \U(1) ~\longrightarrow~ \Aut(C_n) \ , \qquad C_n \ni v
~\longmapsto~ z^n\, v \in C_n \ .
\eeq
The spaces of $\rho_n$-equivariant elements yield a vector space decomposition~\cite[eq.~(1.10)]{maetal}
$$
\ASU=\bigoplus\nolimits_{n\in\IZ}\, \cl_n \ ,
$$
where
\beq\label{libu} 
\cl_n := \ASU \boxtimes_{\rho_n} \IC \simeq \big\{x \in \ASU ~\big|~
\alpha_z(x) = x \, (z^*)^n \big\} \ .
\eeq 

In particular $\cl_0 = \Apq$. One has $\cl_n^* =
\cl_{-n}$ and $\cl_n\,\cl_m = \cl_{n+m}$. Each $\cl_n$ is
clearly a bimodule over $\Apq$.
It may be given as a finitely-generated projective right (and left)
$\Apq$-module of rank one \cite[Prop.~6.4]{SWPod}, and one may think of it as giving sections
of line bundles over $\pq$. 

A dual presentation of the line bundles $\cl_n$ as 
\beq\label{libudual} 
\cl_n = \big\{x \in \ASU ~\big|~ K \lt x = q^{n/2}\, x \big\} \ .
\eeq 
uses the left action of the group-like element $K$ on $\ASU$. 
Indeed, if $H$ is the infinitesimal generator of the $\U(1)$-action $\alpha$, 
the group-like element $K$ can be written as $K = q^{-H/2}$.
Then, from the left action \eqref{lact} of $\su$ one finds
$$
E \lt \cl_n ~\subset~ \cl_{n+2} \quad \mbox{and} \quad
F \lt \cl_n ~\subset~ \cl_{n-2} \ .
$$
On the other hand, commutativity of the left and right actions of
$\su$ yields 
$$
\cl_n \rt  h \subset \cl_n
$$
for all $h\in \su$. More details on these modules will be given later on. For the moment we just mention the following results (cfr. \cite[Prop.~3.1]{KLvS}):
The natural map $\cl_n \otimes \cl_m \to \cl_{n+m}$ defined by
multiplication induces an isomorphism of $\Apq$-bimodules 
\beq\label{masu}
\cl_n \otimes_{\Apq} \cl_m \simeq \cl_{n+m} \ ,
\eeq
and in particular $\Hom_{\Apq}(\cl_m, \cl_n) \simeq\cl_{n-m}$.
Moreover, from the transformations in \eqref{rco}, it follows that an
$\Apq$-module generating set for $\cl_n$ is given by elements
\beq
(\Psi_{(n)})_{\mu}  
=  \begin{cases}
\, a^{*}\,^{ \mu}\,c^{*}\,^{ n-\mu} & \mathrm{for } 
\quad n\geq0 ~, \quad \mu = 0,1, \dots,  n \ , \\[4pt]
\, a^{\mn-\mu}\,c^{\mu} & \mathrm{for } \quad n\leq0 ~, \quad\mu = 0,1, \dots,  \mn \ .
\end{cases}
\label{qpro0}
\eeq
Then one writes equivariant elements as
\beq
\varphi_f  =  
\begin{cases}
 \, \sum_{\mu=0}^{n}\, a^{*}\,^{\mu}\, c^{*}\,^{n-\mu} \,
f_{\mu}  
 =  \sum_{\mu=0}^{n}\, \tilde f_{\mu}~a^{*}\,^{\mu}\, c^{*}\,^{n-\mu} 
& \mathrm{for} \quad n \geq 0 \ , \nn\\[10pt]
 \,  \sum_{\mu=0}^{\mn}\, a^{\mn-\mu}\, c^{\mu}\, f_{\mu}
 =  \sum_{\mu=0}^{\mn}\, \tilde f_{\mu}~a^{\mn-\mu}\, c^{\mu}
& \mathrm{for} \quad n \leq 0  \ , 
\end{cases} 
\label{eqmap}
\eeq
with $f_\mu$ and $\tilde f_\mu$ generic elements in $\Apq$. The
bimodules $\cl_n$ are not free modules, thus the elements in \eqref{qpro} are not independent over $\Apq$.

A generic finite-dimensional representation $(V,\rho)$ for $\U(1)$ 
has a weight decomposition 
$$
V  =  \bigoplus_{n\in W(V)}\, C_n  \otimes V_n \ , \qquad \rho  = 
\bigoplus_{n\in W(V)}\, \rho_n \otimes  \id \ .
$$
Here $(C_n, \rho_n)$ is the one-dimensional irreducible representation
of $\U(1)$ with weight $n\in\IZ$ given in \eqref{ircore}, the spaces
$V_n=\Hom_{\U(1)}(C_n,V)$ are the multiplicity spaces, and the set
$W(V) = \{n\in\IZ ~|~V_n \not= 0\}$ is the set of weights of
$V$. For the corresponding space of $\rho$-equivariant elements we
have a corresponding decomposition 
$$
\ASU \boxtimes_\rho V = \bigoplus_{n\in W(V)}\, \cl_n  \otimes V_n \ ,
$$
with $\cl_n$ the irreducible modules in \eqref{libu} giving sections
of line bundles over $\pq$.

\subsection{Covariant calculi}\label{se:cotqpb}
The principal bundle $(\ASU,\Apq,\U(1))$
is endowed with compatible
non-universal calculi obtained from the
three-dimensional left-covariant calculus on the quantum group 
which we present first. We then describe the unique left-covariant
two-dimensional calculus on the quantum projective line
$\pq$ obtained by restriction, and also the projected
calculus on the structure group $\U(1)$. The calculus on $\pq$ can be
canonically decomposed into a holomorphic and an anti-holomorphic

\subsection{Left-covariant forms on $\mathrm{S}^3_q$}\label{se:lcc} 
The three-dimensional left-covariant calculus on the quantum group $\SU$
has quantum tangent space generated by three elements \cite{wor87}:
$$
X_{z}  = \frac{1-K^{4}}{1-q^{-2}} \ , \qquad X_{-}
 = q^{-1/2}\,F\,K  \quad \mbox{and} \quad X_{+}  = q^{1/2}\,E\,K =
 X_{-}^* \ .
$$
Their coproducts and antipodes are easily found to be
\begin{align}
\cop(X_z)  =  1\otimes X_z + X_z \otimes K^4 \quad &\mbox{and} \quad
\cop(X_\pm)  =  1\otimes X_\pm + X_\pm \otimes K^2 \ , \label{cotb}  \\[4pt]
S(X_z) = - X_z \, K^{-4} \quad &\mbox{and} \quad S(X_\pm) = - X_\pm
\, K^{-2} \ .  \label{antb}
\end{align}  
The dual space of one-forms $\Omega^1(\SU)$ has a basis 
$$
\omega_{z}  = a^*~\dd a+c^*~\dd c \ , \qquad \omega_{-}  = c^*~\dd
a^*-q\,a^*~\dd c^* \quad \mbox{and} \quad
\omega_{+}  = a~\dd c-q\,c~\dd a 
$$
of left-invariant forms. The differential $\dd: \ASU \to
\Omega^1(\SU)$ is given by
\beq\label{exts3} 
\dd f =
(X_{-}\triangleright f) \,\omega_{-} + (X_{+}\triangleright f)
\,\omega_{+} + (X_{z}\triangleright f) \,\omega_{z} 
\eeq 
for all $f\in\ASU$. If $\Delta^{(1)}$ is the (left) coaction of $\ASU$
on itself extended to forms, the left-coinvariance of the basis forms 
is the statement that $\Delta^{(1)}(\omega_{s})=1\otimes\omega_{s}$, 
while the left-covariance of the calculus is stated as
$$
(\Delta \otimes \id)\circ \Delta^{(1)} = \big(\Delta^{(1)} \otimes
\id\big)\circ \Delta^{(1)} \qquad \mathrm{and} \qquad  
(\epsilon \otimes \id)\circ \Delta^{(1)} = 1 \ .
$$
The requirement that it is a $*$-calculus, i.e. $\dd (f^*) = (\dd f)^*
$, yields
$$
\omega_{-}^*= - \omega_{+}  \quad \mbox{and} \quad
\omega_{z}^*=-\omega_{z} \ .
$$
The bimodule structure is given by
\begin{align}\label{bi1}
\omega_{z}\,a = q^{-2}\,a\,\omega_{z} \ , \quad
\omega_{z}\,a^* = q^{2}\,a^*\,\omega_{z} \ , \quad 
\omega_{\pm}\,a = q^{-1}\,a\,\omega_{\pm} \quad \mbox{and} \quad
\omega_{\pm}\,a^* = q\,a^*\,\omega_{\pm} \ , \nn \\[4pt]
\omega_{z}\,c = q^{-2}\,c\,\omega_{z} \ , \quad
\omega_{z}\,c^* = q^{2}\,c^*\,\omega_{z} \ , \quad 
\omega_{\pm}\,c = q^{-1}\,c\,\omega_{\pm} \quad \mbox{and} \quad
\omega_{\pm}\,c^* = q\,c^*\,\omega_{\pm} \ .
\end{align}
Higher degree forms can be defined in a natural way by requiring
compatibility with the commutation relations (the bimodule structure
\eqref{bi1}) and that $\dd^2=0$. 
One has
\beq \label{dformc3} \ 
\dd\omega_{z}  = -\omega_{-}\wedge\omega_{+} \ , \qquad 
\dd \omega_{\pm} = \pm q^{\pm 2}\,\big(1+q^{2}\big)\,\omega_{z}\wedge\omega_{\pm}   ,
\eeq 
together with the commutation relations
\begin{align}
\omega_{+}\wedge\omega_{+} = \omega_{-} & \wedge\omega_{-} = \omega_{z}
\wedge\omega_{z}=0 \ , \nn\\[4pt]
\omega_{-}\wedge\omega_{+}+q^{-2}\,\omega_{+}\wedge\omega_{-}=0 \ ,
\qquad &
\omega_{z}\wedge\omega_{\pm}+
q^{\mp 4}\,\omega_{\pm}\wedge\omega_{z} =0 \ .
\label{commc3}
\end{align}
Finally, there is a unique top form
$\omega_{-}\wedge\omega_{+}\wedge\omega_{z}$ commuting will all elements in $\ASU$.

\subsection{Holomorphic forms on $\pq$}\label{se:cals2}
Restricting the three-dimensional calculus of \S\ref{se:lcc} to
the quantum projective line $\pq$ yields the unique left-covariant
two-dimensional calculus on $\pq$ \cite{Po89}.
The `cotangent bundle' $\Omega^1(\pq)$ is
shown to be isomorphic to the direct sum $\cl_{-2}\oplus\cl_{+2}$ of the
line bundles with degree $\pm\, 2$. 

Since the element $K$ acts (on the left) as the identity on $\Apq$, the differential
\eqref{exts3} when restricted to $\Apq$ becomes
$$
\dd f  =  (X_{-}\triangleright f) \,\omega_{-} + (X_{+}\triangleright
f) \,\omega_{+} = q^{-1/2} (F \triangleright f) \,\omega_{-} + q^{1/2} (E \triangleright f) \,\omega_{+}
$$
for $f\in\Apq$. This leads to a decomposition of the exterior
differential into a holomorphic and an anti-holomorphic part, $\dd
=\dolb + \dol$, with 
\beq\label{hold}
\dolb f = \left(X_{-}\triangleright f\right)\,\omega_{-} \qquad
\mbox{and} \qquad
\dol f = \left(X_{+}\triangleright f\right)\,\omega_{+} 
\eeq
for $f\in\Apq$. 
The calculus on $\pq$ has then quantum tangent space generated by the two elements $X_+$ and $X_-$. 
Moreover, the two differentials $\dol$ and $\dolb$ satisfy
\beq\label{*d}
(\dol f)^* = \dolb f^*.
\eeq
Indeed, using the second relation in \eqref{antb} and compatibility of the $*$-structure with the left action, by direct computation:
$$
(\dol f)^* = - \omega_- (X_+ \lt f)^* = - \omega_- (S(X_+)^* \lt f^*= \omega_- (K^{-2} X_- \lt f^*) = 
(X_- \lt f^*) \omega_- = \dolb f^* ,
$$
This also shows that 
\beq\label{xstar}
(X_+ \lt f)^* = - q^2 X_- \lt f^*  \quad \mbox{and} \quad (X_- \lt f)^* = - q^{-2} X_+ \lt f^*
\eeq
a result we need later on. An explicit computation on the generators of $\pq$ yields 
\begin{align*}
\dolb \bm =  -q^{-1} \, a^{2} \, \omega_{-} \ , \qquad
\dolb\bz = -q^{-1} \, c\,a  \, \omega_{-} \qquad &\mbox{and} \qquad
\dolb\bp =  c^{2} \, \omega_{-} \ , \nn\\[4pt]
\dol\bp =  q \, a^{*}\,^{2} \, \omega_{+} \ , \qquad
\dol\bz =  c^{*}\,a^{*} \, \omega_{+} \qquad &\mbox{and}
\qquad \dol\bm =  -q^{2}\, c^{*}\,^{2} \, \omega_{+} \ .
\end{align*}
And it follows that the module of one-forms decomposes as 
$$
\Omega^1(\pq)=\Omega^{0,1}(\pq) \oplus
\Omega^{1,0}(\pq) \ ,
$$ 
where $\Omega^{0,1}(\pq)\simeq\cl_{-2} \omega_- \simeq\dolb(\Apq)$ is
the $\Apq$-bimodule generated by
$$
\big\{\,\dolb\bm\,,\,\dolb\bz\,,\,\dolb\bp
\big\} = \big\{a^{2}\,,\,c\,a\,,\,c^{2}\big\}\,\omega_{-}  = 
q^{2}\,\omega_{-}\,\big\{a^{2}\,,\,c\,a\,,\,c^{2}\big\}
$$
and $\Omega^{1,0}(\pq)\simeq\cl_{+2} \omega_+ \simeq\dol(\Apq)$ is the
$\Apq$-bimodule generated by
$$
\big\{\dol\bp\,,\,\dol\bz\,,\,\dol\bm\big
\} = \big\{a^{*}\,^{2}\,,\,c^{*}\,a^{*}\,,\,c^{*}\,^{2}\big\}\,
\omega_{+} =  q^{-2}\, \omega_{+}
\,\big\{a^{*}\,^{2}\,,\,c^{*}\,a^{*}\,,\,c^{*}\,^{2}\big\} \ .
$$
These modules of forms are not free and there are relations among the differentials:
$$
\dol\bz - q^{-2}\, \bm~\dol\bp + q^{2}\, \bp~\dol\bm = 0 
\quad \mbox{and} \quad \dolb\bz - \bp~\dolb\bm + q^{-4}\, \bm~\dolb\bp = 0 \ .
$$

The two-dimensional calculus on $\pq$ has a unique (up to scale) top invariant form
$\beta$, which is central, $\beta \,f = f\, \beta$ for all $f\in\Apq$, and
$\Omega^2(\pq)$ is the free $\Apq$-bimodule generated by $\beta$.
Both $\omega_{\pm}$ commute with elements of $\Apq$ and so does
$\omega_{-}\wedge\omega_{+}$, which may be taken as the 
generator $\beta=\omega_{-}\wedge\omega_{+}$ of
$\Omega^2(\pq)$ (cfr. \cite{maj05} or \cite[App.]{SW04}). 
As a model for a more general construction, we may summarise these results as follows.
\begin{prop}\label{2dsph}
The two-dimensional covariant differential calculus on the quantum projective
line $\pq$ is given by
$$
\Omega^{\bullet}(\pq) = \Apq \, \oplus \, \left(\cl_{-2} \omega_{-}
\oplus \cl_{+2} \omega_{+} \right) \, \oplus \, \Apq \omega_{-}\wedge\omega_{+} \ .
$$
Moreover, the splitting $\Omega^1(\pq) = \Omega^{1,0}(\pq)
\oplus\Omega^{0,1}(\pq)$, together with the two maps $\dol$ and
$\dolb$ given above, constitute a complex structure for the
differential calculus.
\end{prop}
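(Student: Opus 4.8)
The plan is to obtain $\Omega^{\bullet}(\pq)$ by the restriction construction for the quantum principal $\U(1)$-bundle $\Apq\hookrightarrow\ASU$, starting from the three-dimensional left-covariant calculus on $\SU$ of \S\ref{se:lcc}, and then to read off the bigrading and the properties of $\dol$ and $\dolb$ from the formulae already displayed. In other words, $\Omega^{\bullet}(\pq)$ should be the subcomplex of horizontal $\U(1)$-invariant forms in $\Omega^{\bullet}(\SU)$, and the work is to identify it explicitly and to check it is closed under $\dd$.

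First I would record how $\U(1)$ acts on the left-invariant forms: a direct computation from the definitions of $\omega_z,\omega_\pm$ and from \eqref{rco} gives $\alpha_z(\omega_-)=(z^*)^2\,\omega_-$, $\alpha_z(\omega_+)=z^2\,\omega_+$ and $\alpha_z(\omega_z)=\omega_z$. Hence, in the notation of \eqref{libu}, a form $x\,\omega_-$ (resp.\ $x\,\omega_+$, resp.\ $x\,\omega_z$) is $\U(1)$-invariant precisely when $x\in\cl_{-2}$ (resp.\ $x\in\cl_{+2}$, resp.\ $x\in\cl_0=\Apq$). Since $X_z\lt f=0$ for $f\in\Apq$ (as $K$ acts as the identity on $\Apq$), \eqref{exts3} shows $\dd(\Apq)$ has no $\omega_z$-component; intersecting the $\Apq$-sub-bimodule it generates with the $\U(1)$-invariants then yields $\cl_{-2}\omega_-\oplus\cl_{+2}\omega_+$, which recovers $\Omega^1(\pq)\simeq\cl_{-2}\oplus\cl_{+2}$, with $\{a^2,ca,c^2\}\omega_-$ and $\{a^{*2},c^*a^*,c^{*2}\}\omega_+$ as $\Apq$-bimodule generators by \eqref{qpro0} with $n=\mp 2$. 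For the top forms, \eqref{commc3} leaves $\beta:=\omega_-\wedge\omega_+$ as the only horizontal left-invariant $2$-form up to scale; it is central by \eqref{bi1} (both $\omega_\pm$ commute with all generators of $\ASU$) and $\U(1)$-invariant by the above, so $\Omega^2(\pq)=\Apq\,\beta$ is free of rank one, while the unique top form $\omega_-\wedge\omega_+\wedge\omega_z$ of $\SU$ is vertical, giving $\Omega^{\ge 3}(\pq)=0$. This produces the displayed shape of $\Omega^{\bullet}(\pq)$.

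The step I expect to be the main obstacle is checking that $\dd$ preserves this horizontal $\U(1)$-invariant subcomplex, i.e.\ that the $\omega_z$-terms created by differentiating a $1$-form cancel. Concretely, for $x\in\cl_{+2}$ one has $K\lt x=q\,x$, hence $X_z\lt x=\tfrac{1-q^4}{1-q^{-2}}\,x=-q^2(1+q^2)\,x$, and \eqref{exts3} together with $\dd\omega_+=q^2(1+q^2)\,\omega_z\wedge\omega_+$ from \eqref{dformc3} gives $\dd(x\,\omega_+)=(X_-\lt x)\,\omega_-\wedge\omega_+ + \big((X_z\lt x)+q^2(1+q^2)\,x\big)\,\omega_z\wedge\omega_+$, in which the last bracket vanishes, so $\dd(x\,\omega_+)=(X_-\lt x)\,\beta\in\Apq\,\beta$; the case $x\in\cl_{-2}$ is symmetric (using $\dd\omega_-=-q^{-2}(1+q^2)\,\omega_z\wedge\omega_-$ and $X_z\lt x=(1+q^{-2})\,x$). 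Verifying these cancellations is the crux; everything else is bookkeeping with $\U(1)$-weights.

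Granting this, I would set $\Omega^{1,0}:=\cl_{+2}\omega_+$, $\Omega^{0,1}:=\cl_{-2}\omega_-$, $\Omega^{1,1}:=\Apq\,\beta$ and $\Omega^{2,0}=\Omega^{0,2}=0$; the computation above together with \eqref{hold} then shows $\dd=\dol+\dolb$ with $\dol$ of bidegree $(1,0)$ and $\dolb$ of bidegree $(0,1)$, the bigrading being compatible with the product since $\cl_n\cl_m=\cl_{n+m}$ and $\beta$ is central, and with the $*$-structure via \eqref{*d}. Finally $\dol^2$ and $\dolb^2$ vanish for degree reasons, and applying $\dd^2=0$ to elements of $\Apq$ and to $1$-forms forces $\dol\dolb+\dolb\dol=0$; this is precisely the data of a complex structure, completing the argument.
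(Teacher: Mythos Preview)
Your proposal is correct and follows the same route the paper takes: the proposition is stated there as a summary of the restriction of the three-dimensional left-covariant calculus on $\SU$ to $\pq$, with the paper deferring details to \cite{maj05} and \cite[App.]{SW04} rather than giving a formal proof. Your explicit verification that the $\omega_z$-contributions cancel when differentiating $x\,\omega_\pm$ for $x\in\cl_{\pm 2}$ is precisely the content behind the paper's one-line remark (just after the proposition) that ``it is natural (and consistent) to demand $\dd\omega_-=\dd\omega_+=0$ when restricted to $\pq$'', so you are supplying what the paper only asserts.

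One small slip: in justifying centrality of $\beta=\omega_-\wedge\omega_+$ you write that ``both $\omega_\pm$ commute with all generators of $\ASU$'', but \eqref{bi1} gives $\omega_\pm a=q^{-1}a\,\omega_\pm$, etc. What is true (and what the paper states) is that $\omega_\pm$ commute with elements of $\Apq$, since the $q$-factors from the two generators in each $B_0,B_\pm$ cancel; this is enough for centrality of $\beta$ over $\Apq$. With that correction the argument stands.
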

Writing any one-form as $\alpha = x\,
\omega_{-} + y\, \omega_{+} \in \cl_{-2} \omega_{-} \oplus \cl_{+2}
\omega_{+}$, the product is given by
$$
(x \,\omega_{-} + y \,\omega_{+} ) \wedge (t \,\omega_{-} + z\, \omega_{+}
) = \big( x\, z - q^{ 2}\, y\, t \big)\, \omega_{-}\wedge\omega_{+} \ .
$$
By \eqref{dformc3} it is natural (and consistent) to demand $\dd \omega_-=\dd
\omega_+=0$ when restricted to $\pq$. Since $K$ acts as $q^{\pm\,1}$ on $\cl_{\pm\,2}$,  
the exterior derivative of a one-form is written as
\begin{align*}
\dd \alpha & = \dd (x \,\omega_{-} + y\, \omega_{+}) 
= \dol x \wedge \omega_{-} + \dolb y \wedge \omega_{+} \nn\\[4pt]
&=
 \big( X_- \lt y - q^{2}\, X_+ \lt x \big) \,
\omega_{-}\wedge\omega_{+} \ ,
\end{align*}
Notice that in this expression, both $X_+ \lt x$ and $X_- \lt y$ belong to $\Apq$, as they should. 

Finally, a Hodge operator at the level of one-forms is constructed in
\cite{maj05} via a left-covariant map $\star:\Omega^{1}(\pq) \to
\Omega^{1}(\pq)$ which squares to the identity $\id$. In the
description of the calculus as given in Proposition~\ref{2dsph}, it is defined
for all $f\in \Apq$, by
\beq\label{hod1} 
\star (\dol f) = \dol f  \quad \mbox{and} \quad \star ( \dolb f) = -\dolb f   .
\eeq 
One shows its compatibility with the
bimodule structure, i.e. the map $\star$ is a bimodule map. Thus
$\star$ has values $\pm\, 1$ on holomorphic or anti-holomorphic
one-forms respectively.
In particular, $\star 
\omega_\pm = \pm\, \omega_\pm$. The 
Hodge operator is naturally extended by requiring
$$
\star 1 = \omega_{-}\wedge\omega_{+} \quad \mbox{and} \quad
\star (\omega_{-}\wedge\omega_{+}) = 1 \ ,
$$
although this extension will not be needed in the present paper.

For completeness, we finally mention  the calculus on $\U(1)$ which
makes all three calculi compatible from the quantum principal bundle
point of view. One finds \cite{BM93} that the calculus is one-dimensional and bicovariant with quantum tangent space generated by
\beq\label{vvf} 
X = X_{z}  = \frac{1-K^{4}}{1-q^{-2}}   , 
\eeq 
with dual one-form $\omega_{z} = z^{*} \dd z$ and commutation relations
$$
\omega_{z} \,z  =  q^{-2}\, z\,\omega_{z} \ , 
\quad \mbox{and} \quad
\omega_{z}\, z^{*}  =  q^{2}\,z^{*}\,\omega_{z} \ .
$$

\section{Positive cocycles and twisted conformal structures}
In \cite[Section VI.2]{co94} it is shown that positive Hochschild cocycles on
the algebra of smooth functions on a compact oriented 2-dimensional manifold encode 
the information needed to define  a complex structure on the surface. 
The relevant positive cocycle  is in the same Hochschild cohomology class of the cyclic 
cocycle giving the fundamental class of the manifold.  This result 
suggests regarding positive cyclic and Hochschild 
cocycles as a starting point in defining complex noncommutative structures.

\subsection{Twisted positive Hochschild cocycles}\label{se:tphc}
On the quantum projective line there is no non-trivial 2-dimensional cyclic cocycles \cite{MNW}.  
In \cite{KLvS} we considered twisted Hochschild and cyclic cocycles and gave a notion of \emph{twisted positivity} for Hochschild cocycles. We exhibited an example for the complex structure on the quantum projective line.   

A Hochschild $2n$-cocycle $\varphi$ on an $*$-algebra $A$ is called 
{\it positive} \cite{CoCu} if the  pairing 
$$
\langle a_0\otimes a_1\otimes \cdots\otimes a_n, \,
  b_0\otimes b_1\otimes \cdots\otimes b_n \rangle = \varphi (
 b_0^* a_0, a_1, \cdots a_n, b_n^*,  \cdots,  b_1^*)   ,
 $$
defines a positive sesquilinear form on the vector spaces $A^{\otimes (n+1)}$. 
For $n=0$ this is a positive trace on an $*$-algebra. 
Given a differential graded $*$-algebra $(\Omega A, \, \dd)$ on $A$, 
a Hochschild $2n$-cocycle on $A$ defines a sesquilinear pairing on the space $\Omega^n A$ of $n$-forms (typically middle-degree ones). For $\omega=a_0 \dd a_1 \cdots \dd a_n$ and $\eta=b_0 \dd b_1 \cdots \dd b_n$ one defines 
$$ 
\langle \omega, \, \eta\rangle: = \varphi (b_0^* a_0, a_1, \cdots a_n, b_n^*,  \cdots,  b_1^*)   ,
$$
and extended by linearity. One has that $\langle a \omega, \, \eta \rangle = \langle  \omega, \, a^* \eta \rangle$ for all $a\in A$.
Positivity of $\varphi$ is equivalent to positivity of this sesquilinear form on $\Omega^n A$.

Next, let $A$ be an algebra and $\sigma: A \to A$ be an automorphism of $A$.
For $n \geq 0$,  let $C^n(A)= \Hom_{\mathbb{C}} (A^{\otimes (n+1)}, \mathbb{C})$ be the
space of $(n+1)$-linear functionals (the $n$-cochains) on $A$. Define the twisted cyclic
operator $\lambda_{\sigma}: C^n (A) \to C^n(A)$  by
$$ 
(\lambda_{\sigma} \varphi) (a_0, \cdots, a_n):= (-1)^n  \varphi (\sigma
(a_n), a_0, a_1, \cdots, a_{n-1}).$$
Clearly, $ (\lambda_{\sigma}^{n+1} \varphi) (a_0, \cdots, a_n)= \varphi
(\sigma(a_0), \cdots, \sigma (a_n) ) .$
Let 
$$
C^n_{\sigma} (A) = \text{ker} \left ( (1-\lambda_{\sigma}^{ n+1 }):
C^n (A) \longrightarrow C^n(A) \right) 
$$
denote the space of {\it twisted Hochschild n-cochains} on $A$. Define
the {\it twisted Hochschild coboundary} operator  $b_{\sigma}:
C^n_{\sigma} (A) \to  C^{n+1}_{\sigma} (A)$   and the operator $b_{\sigma}':
C^n_{\sigma} (A) \to  C^{n+1}_{\sigma} (A)$ by
\begin{align*}
b_{\sigma} \varphi (a_0, \cdots, a_{n+1}) &= \sum_{i=0}^n (-1)^{i}\varphi (a_0, \cdots,
a_ia_{i+1}, \cdots, a_{n+1}) \\
& \qquad\qquad\qquad\qquad\qquad + (-1)^{n+1}\varphi (\sigma (a_{n+1}) a_0, a_1, \cdots, a_{n-1}),\\
b_{\sigma}' \varphi (a_0, \cdots, a_{n+1}) &= \sum_{i=0}^n (-1)^{i}\varphi (a_0, \cdots,
a_ia_{i+1}, \cdots, a_{n+1})   .
\end{align*}
Then $b_{\sigma}$ sends twisted cochains to twisted
cochains. The cohomology of the complex $(C^*_{\sigma} (A), b_{\sigma})$
is the  {\it twisted Hochschild cohomology} of $A$.
The $n$-cochain $\varphi \in C^n (A)$ is called {\it twisted cyclic} if $(1-\lambda_{\sigma}) \varphi =0$, or, equivalently
$$
\varphi (\sigma (a_n), a_0, \cdots, a_{n-1})= (-1)^n  \varphi (a_0,
a_1, \cdots, a_{n}) ,
$$
$a_0, a_1, \dots, a_n$ elements in $A$. 
Denote the space of cyclic $n$-cochains by $C^n_{\lambda, \sigma} (A)$.
Obviously $C^n_{\lambda, \sigma} (A) \subset C^n_{\sigma} (A)$. The
relation $ (1-\lambda_{\sigma}) b_{\sigma}  = b'_{\sigma}
(1-\lambda_{\sigma})$ shows that the operator $b_{\sigma}$ preserves the
space of cyclic cochains and one gets the twisted cyclic complex of the
pair $(A, \sigma)$, denoted    $(C^n_{\lambda_\sigma} (A), b_{\sigma})$. The
cohomology of the  twisted cyclic complex, denoted $\mathrm{HC}^{\bullet}_\sigma(A)$, is  
the {\it twisted cyclic cohomology} of $A$. The following was given in \cite{KLvS}.
\begin{defi}
A twisted Hochschild $2n$-cocycle $\varphi$ on an $*$-algebra $A$ is called {\it twisted positive} if the  
pairing:
$$\langle a_0\otimes a_1\otimes \cdots\otimes a_n, \,
  b_0\otimes b_1\otimes \cdots\otimes b_n \rangle := \varphi (
 \sigma(b_0^*) a_0, a_1, \cdots a_n, b_n^*,  \cdots,  b_1^*).$$
 defines a positive sesquilinear form on the vector space $A^{\otimes (n+1)}$.
\end{defi} 
\noindent
For $n=0$ this is a twisted positive trace on an $*$-algebra. 
Note that we do not require $\varphi$ to be a twisted cyclic cocycle, only a Hochschild one. 

\subsection{A cocycle for the quantum projective line}\label{se:tphcs2}
Let us go back to the quantum projective line $\pq$.
Let $ h: \ASU \to \IC$ be the normalized positive Haar state of $\SU$. 
It is a positive twisted trace,  
$$ 
h(x y)= h(\sigma (y)x) , \qquad \textup{for} \quad x, y \in \ASU , 
$$ 
with (modular) automorphism $\sigma: \ASU \to \ASU$ given by
$$ 
\sigma (x)=K^2 \lt x \rt K^2 
$$
(cf. \cite[Prop.~4.15]{KS98}). When restricted to $\Apq$, this induces the automorphism  
\beq\label{sigmah}
\sigma: \Apq \to \Apq, \qquad \sigma (x)= x \rt K^2  , \qquad \textup{for} \quad x \in \Apq .
\eeq
Bi-invariance on $\ASU$ becomes right-invariance for $\su$ acting on $\Apq$:
$$
h(x \rt v) = \varepsilon(v) h(x) , \qquad \textup{for} \quad x\in \Apq ,  \; v \in \su .
$$

With $\omega_{-}\wedge\omega_{+}$ the central generator of $\Omega^2({\pq})$, $h$ the Haar state on $\Apq$, and $\sigma$ its modular automorphism \eqref{sigmah}, the linear functional 
$$
\int_h  : \;\; \Omega^2({\pq}) \to \IC, \qquad \int_h a \, \omega_{-}\wedge\omega_{+} := h(a), 
$$
defines \cite{SW04} a non-trivial twisted cyclic $2$-cocycle $\tau$ on $\Apq$ by 
$$
\tau(a_0,a_1,a_2):= \frac{1}{2} \int_h a_0\, \dd a_1 \wedge \dd a_2 .
$$
The non-triviality means that there is no twisted cyclic 1-cochain $\alpha$ on $\Apq$ such that
$b_\sigma \alpha = \tau$ and $\lambda_\sigma \alpha= \alpha$.  
Thus $\tau$ is a non-trivial class in $\mathrm{HC}^2_\sigma(\pq)$.

\begin{lemm} \label{lem0}
For any $a_0, a_1, a_2 , a_3 \in \Apq$ it holds that:
$$
\int_h a_0 ( \dol a_1 \dolb a_2 ) a_3 = \int_h \sigma (a_3) a_0  \dol a_1 \dolb a_2 .
$$
\end{lemm}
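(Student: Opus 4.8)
The plan is to reduce the claimed identity to the twisted trace property of the Haar state together with the observation that $\dol a_1 \, \dolb a_2$ is a top form, hence lies in the central bimodule $\Apq\,\omega_-\wedge\omega_+$. First I would write, using the explicit description of the calculus from Proposition~\ref{2dsph}, $\dol a_1 = (X_+\lt a_1)\,\omega_+$ and $\dolb a_2 = (X_-\lt a_2)\,\omega_-$, so that, moving $\omega_+$ past the element $X_-\lt a_2 \in \Apq$ with the bimodule relations \eqref{bi1} (which give a scalar factor, since $K$ acts as $q^{\pm 2}$ on $\cl_{\pm 2}$ and the net effect on $\omega_+ \wedge \omega_-$ is controlled by the commutation relations \eqref{commc3}), one obtains
$$
\dol a_1 \, \dolb a_2 = g(a_1,a_2)\ \omega_-\wedge\omega_+ ,
$$
for a definite element $g(a_1,a_2)\in\Apq$ built from $X_+\lt a_1$ and $X_-\lt a_2$ up to a fixed power of $q$. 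Since $\omega_-\wedge\omega_+$ is central, $a_0\,(\dol a_1\,\dolb a_2)\,a_3 = (a_0\,g(a_1,a_2)\,a_3)\,\omega_-\wedge\omega_+$ and $\sigma(a_3)\,a_0\,\dol a_1\,\dolb a_2 = (\sigma(a_3)\,a_0\,g(a_1,a_2))\,\omega_-\wedge\omega_+$.

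The next step is to apply $\int_h$ to both sides. By definition $\int_h b\,\omega_-\wedge\omega_+ = h(b)$, so the left-hand side equals $h(a_0\,g(a_1,a_2)\,a_3)$ and the right-hand side equals $h(\sigma(a_3)\,a_0\,g(a_1,a_2))$. These are equal precisely by the twisted trace identity $h(xy) = h(\sigma(y)x)$ applied with $x = a_0\,g(a_1,a_2)$ and $y = a_3$; one must only check that $\sigma$ restricted to $\Apq$ is exactly the modular automorphism \eqref{sigmah} appearing in that identity, which is the content of the displayed formula $\sigma(x) = x\rt K^2$ recalled just before the lemma.

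The one point requiring genuine care — and the step I expect to be the main obstacle — is bookkeeping the various powers of $q$ that arise when commuting $\omega_+$ past elements of $\Apq$ and reordering $\omega_+\wedge\omega_-$ into $\omega_-\wedge\omega_+$ via \eqref{commc3}. A priori these could produce a scalar mismatch between the two sides; the resolution is that the \emph{same} factor $g(a_1,a_2)$ appears on both sides (the reordering is done once, before $a_0$ and $a_3$ enter), so whatever power of $q$ is generated is common to both expressions and cancels in the comparison. Thus no compatibility condition on $\sigma$ versus the $q$-factors is needed beyond the already-established relation \eqref{sigmah}. Strictly speaking one should also verify bilinearity in $a_1,a_2$ of the whole identity so that it suffices to check it on the generators $\{\bz,\bp,\bm\}$ of $\Apq$, where $X_\pm\lt(\cdot)$ is given explicitly by the action \eqref{lact}; but since every step above is linear in $a_1$ and in $a_2$, this reduction is automatic and the general case follows.
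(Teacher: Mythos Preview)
Your argument is correct and follows exactly the paper's own proof: write $\dol a_1\,\dolb a_2 = y\,\omega_-\wedge\omega_+$ for some $y\in\Apq$, use centrality of the top form to bring $a_3$ to the left, and conclude by the twisted trace property $h(a_0 y a_3)=h(\sigma(a_3)a_0 y)$. The paper does not bother computing $y$ explicitly or tracking the $q$-factors (as you rightly observe, they are common to both sides), so your caution there is unnecessary but harmless.
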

\begin{proof}
Write $ \dol a_1 \dolb a_2 = y \, \omega_- \wedge \omega_+$, for some $y \in \Apq$. 
Using the fact that 
$ \omega_- \wedge \omega_+$ commutes with elements in $\Apq$, we have that 
\begin{align*}
\int_h a_0 ( \dol a_1 \dolb a_2 ) a_3 - \int_h \sigma (a_3) a_0  \dol a_1 \dolb a_2 &= \int_h a_0 y \, \omega_- \wedge \omega_{+} a_3 - \int_h \sigma (a_3) a_0 y \, \omega_- \wedge \omega_{+}  \\
& = \int_h a_0 y a_3 \, \omega_- \wedge \omega_+  - \int_h \sigma (a_3) a_0 y \, \omega_- \wedge \omega_+  \\
& = h (a_0 ya_3)- h(\sigma (a_3) a_0 y)= 0  
 \end{align*}
from the twisted property of the Haar state.
\end{proof}

\begin{prop}\label{tpos}
The cochain $\varphi \in C^2 (\Apq)$ defined by 
$$
\varphi (a_0, a_1, a_2)=\int_h a_0\, \dol a_1 \, \dolb a_2
$$
is a twisted Hochschild 2-cocycle on $\Apq$, that is to say $b_{\sigma} \varphi =0$ and  
$\lambda^3_\sigma \varphi = \varphi$; it is also positive, with positivity expressed as:
$$ 
\int_h a_0\, \dol a_1 (a_0 \, \dol a_1)^* \geq 0   , \qquad \mbox{for all} \quad a_0, a_1 \in \Apq   .
$$
\end{prop}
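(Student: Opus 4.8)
The plan is to establish, in this order, the twisted Hochschild cocycle identity $b_\sigma\varphi=0$, the condition $\lambda_\sigma^3\varphi=\varphi$ (so that $\varphi\in C^2_\sigma(\Apq)$ and $b_\sigma\varphi$ is defined), and the twisted positivity; the only inputs beyond formal manipulation are the Leibniz rule for $\dol$ and $\dolb$, the identity \eqref{*d}, Lemma~\ref{lem0}, and positivity of the Haar state $h$. For the cocycle identity I would expand
\[
b_\sigma\varphi(a_0,a_1,a_2,a_3)=\varphi(a_0a_1,a_2,a_3)-\varphi(a_0,a_1a_2,a_3)+\varphi(a_0,a_1,a_2a_3)-\varphi(\sigma(a_3)a_0,a_1,a_2)
\]
and use that $\dol$ and $\dolb$ are derivations on $\Apq$ with values in $\Omega^{1,0}(\pq)$, resp.\ $\Omega^{0,1}(\pq)$, so that all four terms are integrals over $\Omega^2(\pq)$. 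The two interior contributions $\int_h a_0a_1(\dol a_2)(\dolb a_3)$ and $\int_h a_0(\dol a_1)a_2(\dolb a_3)$ then each occur twice with opposite signs and cancel, and what survives is exactly $\int_h a_0(\dol a_1\,\dolb a_2)\,a_3-\int_h\sigma(a_3)\,a_0\,\dol a_1\,\dolb a_2$, which vanishes by Lemma~\ref{lem0}.

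For $\lambda_\sigma^3\varphi=\varphi$, equivalently $\varphi(\sigma a_0,\sigma a_1,\sigma a_2)=\varphi(a_0,a_1,a_2)$, I would use that on $\Apq$ the modular automorphism $\sigma$ is the right action of the group-like element $K^2$, hence commutes with the left actions of $X_\pm$ that define $\dol$ and $\dolb$. Consequently $\sigma$ extends to a graded algebra automorphism of $\Omega^\bullet(\pq)$, acting as $g\,\omega_+\mapsto(g\rt K^2)\,\omega_+$ on $\Omega^{1,0}(\pq)\simeq\cl_{+2}\,\omega_+$ and analogously on $\Omega^{0,1}(\pq)\simeq\cl_{-2}\,\omega_-$; this extension commutes with $\dol$ and $\dolb$, and consistency with the product $\Omega^{1,0}(\pq)\times\Omega^{0,1}(\pq)\to\Omega^2(\pq)$ forces it to fix the central generator $\beta=\omega_-\wedge\omega_+$. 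Since moreover $h\circ\sigma=h$ (right-invariance of $h$, with $\epsilon(K)=1$), one gets $\int_h\circ\,\sigma=\int_h$ on $\Omega^2(\pq)$, whence $\varphi(\sigma a_0,\sigma a_1,\sigma a_2)=\int_h\sigma(a_0\,\dol a_1\,\dolb a_2)=\int_h a_0\,\dol a_1\,\dolb a_2=\varphi(a_0,a_1,a_2)$.

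For positivity, the sesquilinear form of the definition evaluated on $\xi=\sum_i a_0^{(i)}\otimes a_1^{(i)}$ equals, using \eqref{*d} in the form $\dolb(a^*)=(\dol a)^*$, the sum $\sum_{i,j}\int_h\sigma((a_0^{(j)})^*)\,a_0^{(i)}\,\dol a_1^{(i)}\,(\dol a_1^{(j)})^*$; applying Lemma~\ref{lem0} term by term, with last argument $(a_0^{(j)})^*$, turns this into $\sum_{i,j}\int_h(a_0^{(i)}\dol a_1^{(i)})(a_0^{(j)}\dol a_1^{(j)})^*=\int_h\eta\,\eta^*$, where $\eta=\sum_i a_0^{(i)}\dol a_1^{(i)}$ ranges over all of $\Omega^{1,0}(\pq)$ as $\xi$ varies. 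Writing $\eta=G\,\omega_+$ with $G\in\cl_{+2}\subset\ASU$ and using $\omega_+^*=-\omega_-$, $\omega_+\wedge\omega_-=-q^2\,\omega_-\wedge\omega_+$, and centrality of $\beta$, one finds $\eta\,\eta^*=q^2\,GG^*\,\beta$, so $\int_h\eta\,\eta^*=q^2\,h(GG^*)\ge0$ because $h$ is a positive state; the special case $\xi=a_0\otimes a_1$ is the displayed inequality.

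The computations above are mostly bookkeeping, and the step I expect to carry the real content — and would check most carefully — is the use of Lemma~\ref{lem0} in the positivity argument: it is precisely the twisted trace property of $h$ encoded there that moves $\sigma((a_0)^*)$ from the left past $a_0\,\dol a_1\,(\dol a_1)^*$ to an ordinary $(a_0)^*$ on the right, converting the expression into a genuine square $(a_0\dol a_1)(a_0\dol a_1)^*$, so that positivity of the state $h$ yields twisted positivity of $\varphi$. A secondary technical point is verifying that the extension of $\sigma$ to $\Omega^\bullet(\pq)$ used in the second step is well defined and compatible with the bimodule commutation relations of the calculus.
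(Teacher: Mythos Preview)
Your proposal is correct and, for the positivity, follows essentially the same route as the paper: use Lemma~\ref{lem0} to pass from $\int_h\sigma(a_0^*)a_0\,\dol a_1\,\dolb a_1^*$ to $\int_h a_0\,\dol a_1\,(a_0\,\dol a_1)^*$, then write $\dol a_1=y\,\omega_+$ (your $G$), use $\omega_+^*=-\omega_-$ and $\omega_+\wedge\omega_-=-q^2\,\omega_-\wedge\omega_+$ together with centrality of $\beta$ to reduce to $q^2\,h(GG^*)\ge 0$. The paper actually proves only the positivity here, deferring $b_\sigma\varphi=0$ and $\lambda_\sigma^3\varphi=\varphi$ to \cite{KLvS}; your sketches of those two parts are sound, and your caution about checking that the extension of $\sigma$ to $\Omega^\bullet(\pq)$ is compatible with the bimodule relations is well placed (it follows from multiplicativity of the right $K^2$-action and the fact that it preserves each $\cl_n$).
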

\begin{proof}
Here we only show explicitly the twisted positivity for later use while referring to \cite{KLvS} for the full proof. Now, the hermitian scalar product on $\Omega^{(1,0)}(\pq)$, 
$$
\langle  a_0  \partial a_1, b^0  \partial b^1 \rangle: = \varphi(\sigma(b_0^*) a_0, a_1, b_1^*) =
\int_h \sigma(b_0^*) a_0 \, \dol a_1\, \dolb b_1^*,
$$
determines a positive sesquilinear form if for all $a_0, a_1 \in \Apq$ it holds that
$$ 
\int_h \sigma(a_0^*) a_0  \dol a_1\, \dolb a_1^* = \int_h a_0 \, \dol a_1 (a_0 \, \dol a_1)^* \geq 0 .
$$
The first equality follows again from Lemma~\ref{lem0}. Indeed,  
\begin{align*} 
\int_h a_0 \dol a_1 (a_0  \partial a_1)^*   = \int_h a_0 \dol a_1 (\partial a_1)^* a_0^* =
\int_h \sigma(a_0^*) a_0 \dol a_1 \dolb a_1^*. 
\end{align*} 
Then, if $ \dol a_1= y\omega_+$ it follows that  $ \dolb a_1^* = (\dol a_1)^* = - \omega_- y^*$ from \eqref{*d}. In turn, 
\begin{align*}
\int_h \sigma(a_0^*) a_0 \, \dol a_1\, \dolb a_1^* & = - \int_h \sigma(a_0^*) a_0 \, y\, \omega_+ \wedge \omega_- y^* 
= q^{2} \int_h \sigma(a_0^*) a_0 \, y\, y^* \, \omega_- \wedge \omega_+ \\[4pt] 
& = q^{2} h(\sigma(a_0^*) a_0 \, y y^*) = q^{2} h (a_0 y y^* (a_0)^*) = q^{2} 
h ( a_0 y  (a_0 y)^* ) \geq 0, 
\end{align*}
the positivity being then evident from the positivity of the state $h$.
\end{proof}
\noindent
In \cite{KLvS} it was also shown that the twisted Hochschild cocycles $\tau$ and $\varphi$ are cohomologous: $\varphi -\tau = b_{\sigma} \psi$, 
for the twisted Hochschild 1-cochain $\psi$ on $\Apq$ given by
$$
\psi (a, b) = \frac{1}{2}\, \int_h a \, \dol \dolb (b)   .
$$
Notice that the cocycle $\varphi$ is not twisted cyclic, only a twisted Hochschild one.
In fact, a trivial one: the twisted Hochschild cohomology of the algebra $\Apq$ is trivial for the for modular automorphism $\sigma$ \cite{had07}.

\subsection{Couplings to bundles}
To couple the twisted cyclic cocycle $\tau$ (and the twisted Hochschild cocycle $\varphi$) with  bundles over $\pq$, one needs a twisted Chern character. For the present paper it is enough to consider the lowest term, that of a twisted or `quantum trace' \cite{wa07}. 
Looking at $\sigma (x)= x \rt K^2$ in \eqref{sigmah}, if 
$M \in \Mat_{n+1}(\Apq)$, its (partial) quantum trace is the element $\qtr(M) \in \Apq$ given by
$$
\qtr(M):= \tr\left(M \pi_{n/2} (K^{2})\right) :=\sum\nolimits_{jl} M_{jl} 
\left(\pi_{n/2}(K^{2})\right)_{lj},
$$
where $\pi_{n/2} (K^{2})$ is the matrix from \eqref{eq:uqsu2-repns}
for the spin $J=n/2$ representation of $\su$. Notice that since the representation matrix $\pi_{n/2} (K^{2})$ is diagonal and positive, as it happens for the usual trace, for $M$ a positive matrix $\qtr(M)=0$ implies $M=0$, a fact we shall use later on. 
The $q$-trace is `twisted' by the automorphism $\sigma$, that is 
$$
\qtr(M_1 M_2 ) = \qtr\left( (M_2 \rt K^{2}) M_1 \right) = 
\qtr\left( \sigma(M_2) M_1 \right).
$$
Indeed, with `right crossed product' rules, that is 
$x h = {\co{h}{1}}\,(x \rt \co{h}{2})$, for $x\in\Apq$ and $h\in \su$ with Sweedler notation, 
one finds
\begin{align*}
\qtr(M_1 M_2 ) &= \tr\left( M_1 M_2 \pi_{n/2} (K^{2})\right) 
= \tr\left( M_1 \pi_{n/2} (K^{2}) (M_2 \rt K^{2}) \right) \\[4pt]
&= \tr\left((M_2 \rt K^{2}) M_1 \pi_{n/2} (K^{2})  \right) =
\qtr\left( (M_2 \rt K^{2}) M_1 \right) .
\end{align*}

Then, if $\qp \in \Mat_{n+1}(\Apq)$ is a projection, its pairing with the twisted cyclic cocycle 
$\tau$ via the twisted trace,
\beq\label{qind}
\big(\tau \circ \qtr\big) (\qp, \qp, \qp ) = \frac{1}{2} \int_h \qtr(\qp\, \dd \qp \wedge \dd \qp) , 
\eeq
results into a $q$-integer that is obtained \cite{NT05,wa07} as the $q$-index of the the Dirac operator on $\pq$, i.e. as the difference between the quantum dimensions of its kernel and
cokernel computed using $\qtr$. The quantity \eqref{qind}
may be regarded as a quantum Fredholm index computed from
the pairing between the $\sigma$-twisted cyclic cohomology and the
(Hopf algebraic) $\su$-equivariant K-theory of $\pq$. 

On the other hand, the pairing of $\qp$ with the twisted Hochschild cocycle $\varphi$,
\beq\label{qpos}
\big(\varphi \circ \qtr \big) (\qp, \qp, \qp ) = 
\int_h  \qtr(\qp\, \dol \qp \, \dolb \qp )
\eeq
is still positive from Proposition~\ref{tpos}, with positivity expressed now as:
$$
\int_h \qtr\big(\qp\, \dol \qp \, (\qp \, \dol \qp)^*\big) \geq 0   .
$$
Much structure will emerge by comparing the $q$-index in \eqref{qind} with the positive quantity in \eqref{qpos} as we show in the next section. 

\section{The action functional and the lower bound}
We arrive to a natural energy functional for a projection $\qp \in \Mat_{n+1}(\Apq)$ when 
comparing the $q$-index in \eqref{qind} with the positive quantity in \eqref{qpos}. The construction is valid for any algebra with a `twisted positive geometry' as we shall indicate. 

\subsection{Sigma-models on the quantum projective line}
As a working model, we start with the quantum projective line $\pq$ with the details of its geometry as described before. 

\begin{prop} \label{apeq} 
For any projection $\qp \in \Mat_{n+1}(\Apq)$ it holds that
\beq\label{ATP}
\int_h \qtr \big( \qp\, (\star \dd \qp) \wedge \dd \qp  \big) +
\int_h \qtr \big( \qp\, \dd \qp \wedge \dd \qp  \big)
= 2 \int_h \qtr\big(\qp\, \dol \qp \, (\qp \, \dol \qp)^*\big) 
\eeq
as well as 
\beq\label{ATN}
\int_h \qtr \big( \qp\, (\star \dd \qp) \wedge \dd \qp  \big) -
\int_h \qtr \big( \qp\, \dd \qp \wedge \dd \qp  \big)
= - 2 \int_h \qtr\big(\qp\, \dolb \qp \, (\qp \, \dolb \qp)^*\big) .
\eeq
\end{prop}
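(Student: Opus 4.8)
The plan is to expand everything in terms of the complex (Hodge) decomposition $\dd = \dol + \dolb$ and the Hodge operator $\star$, which acts as $+1$ on $\Omega^{1,0}(\pq)$ and $-1$ on $\Omega^{0,1}(\pq)$ by \eqref{hod1}. First I would write $\dd\qp = \dol\qp + \dolb\qp$ for the (matrix-valued) one-form, so that $\star\,\dd\qp = \dol\qp - \dolb\qp$, these being computed entrywise. Wedging and keeping only the $\Omega^{1,0}\wedge\Omega^{0,1}$ and $\Omega^{0,1}\wedge\Omega^{1,0}$ components (since $\dol\qp\wedge\dol\qp$ and $\dolb\qp\wedge\dolb\qp$ vanish — they are built from $\omega_+\wedge\omega_+ = \omega_-\wedge\omega_- = 0$), one gets
$$
\qp\,(\star\dd\qp)\wedge\dd\qp = \qp\,\dol\qp\wedge\dolb\qp - \qp\,\dolb\qp\wedge\dol\qp,
$$
$$
\qp\,\dd\qp\wedge\dd\qp = \qp\,\dol\qp\wedge\dolb\qp + \qp\,\dolb\qp\wedge\dol\qp.
$$
Hence the left-hand side of \eqref{ATP} equals $2\int_h\qtr\big(\qp\,\dol\qp\wedge\dolb\qp\big)$ and the left-hand side of \eqref{ATN} equals $-2\int_h\qtr\big(\qp\,\dolb\qp\wedge\dol\qp\big)$.

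The second step is to rewrite these in the ``positivity'' form on the right. Using $\qp^2 = \qp$ one has the standard identities $\qp\,\dd\qp\,\qp = 0$, hence $\qp(\dol\qp)\qp = \qp(\dolb\qp)\qp = 0$, from which $\qp\,\dol\qp = \qp\,\dol\qp\,(1-\qp) = (\dol\qp)\qp - \qp\,(\dol\qp)\,\qp + \dots$; more to the point, $\qp\,\dol\qp\wedge\dolb\qp = \qp\,\dol\qp\,(1-\qp)\wedge\dolb\qp$ and, since $\qp = \qp^*$ and $(\dol\qp)^* = \dolb\qp^* = \dolb\qp$ entrywise by \eqref{*d} (applied with the $*$-operation on $\Mat_{n+1}$ combining transpose and $*$ on $\Apq$), one identifies $(1-\qp)\dolb\qp = (\qp\,\dol\qp)^*$ up to the Hodge/$\omega$ bookkeeping — precisely the rearrangement already carried out inside the proof of Proposition~\ref{tpos}, now applied entrywise and then summed against $\pi_{n/2}(K^2)$ via $\qtr$. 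This gives $\int_h\qtr\big(\qp\,\dol\qp\wedge\dolb\qp\big) = \int_h\qtr\big(\qp\,\dol\qp\,(\qp\,\dol\qp)^*\big)$, and similarly for the anti-holomorphic piece, yielding \eqref{ATP} and \eqref{ATN}.

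The one genuinely delicate point — the main obstacle — is the careful matrix bookkeeping when transporting the scalar computation of Proposition~\ref{tpos} to $\Mat_{n+1}(\Apq)$: the $*$ on $\Mat_{n+1}(\Apq)$ reverses order and transposes, the wedge product of matrix-valued forms involves both matrix multiplication and the form product $(x\omega_- + y\omega_+)\wedge(t\omega_- + z\omega_+) = (xz - q^2 yt)\,\omega_-\wedge\omega_+$, and the twist must be threaded through $\qtr$ using its cyclicity property $\qtr(M_1M_2) = \qtr(\sigma(M_2)M_1)$ together with Lemma~\ref{lem0}. Concretely I would: (i) fix the convention that for a matrix one-form $\alpha = \sum_\mu \alpha_\mu \otimes \omega_\mu$ one has $\alpha^* = \sum_\mu \alpha_\mu^* \otimes \omega_\mu^*$ with $\omega_\pm^* = -\omega_\mp$; (ii) check that $\qp\,\dol\qp$ has only an $\omega_+$-component, say $\qp\,\dol\qp = Y\,\omega_+$ with $Y \in \Mat_{n+1}(\Apq)$, so that $(\qp\,\dol\qp)^* = -\omega_- Y^* = -Y^*\omega_-$ (the $\omega$'s being central); (iii) compute $\qp\,\dol\qp\wedge\dolb\qp$: write $\dolb\qp = (1-\qp)\dolb\qp\,\qp + \qp\,\dolb\qp\,(1-\qp)$ using $\qp\,\dd\qp\,\qp = 0$, note that the surviving cross-term pairs with $Y\omega_+$ to produce exactly $Y\,(1-\qp)\dolb\qp$, and recognize $(1-\qp)\dolb\qp\,\qp$ as, up to the $-\omega_-$, the adjoint $Y^*$ since $(\qp\,\dol\qp\,(1-\qp))^* = (1-\qp)(\dol\qp)^*\qp = (1-\qp)\dolb\qp\,\qp$; (iv) convert the resulting $\omega_+\wedge\omega_- = -q^{-2}\,\omega_-\wedge\omega_+$ and feed through $\int_h$ and $\qtr$, using the twisted trace property to absorb the $\sigma$ exactly as in the display at the end of the proof of Proposition~\ref{tpos}. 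The anti-holomorphic identity \eqref{ATN} is entirely parallel, with the roles of $\dol$ and $\dolb$, of $\omega_+$ and $\omega_-$, and an overall sign from $\star\,\dolb = -\dolb$, exchanged.
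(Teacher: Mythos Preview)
Your approach is correct and follows essentially the same route as the paper: decompose via $\dd = \dol + \dolb$ with $\star\,\dol\qp = \dol\qp$, $\star\,\dolb\qp = -\dolb\qp$, obtain
\[
\qp\,(\star\dd\qp)\wedge\dd\qp = \qp\,\dol\qp\wedge\dolb\qp - \qp\,\dolb\qp\wedge\dol\qp
\quad\text{and}\quad
\qp\,\dd\qp\wedge\dd\qp = \qp\,\dol\qp\wedge\dolb\qp + \qp\,\dolb\qp\wedge\dol\qp,
\]
and then identify $\qp\,\dol\qp\wedge\dolb\qp$ with $\qp\,\dol\qp\wedge(\qp\,\dol\qp)^*$ (and likewise the $\dolb$-version) using $\qp\,\dd\qp\,\qp=0$ together with $(\dol\qp)^*=\dolb\qp$. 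This is exactly the content of the paper's displayed identities \eqref{P}--\eqref{A}.

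Where you diverge is only in the last paragraph, and there you are over-complicating. The paper establishes the needed identities \emph{pointwise}, as equalities of matrix-valued two-forms, before any integration or trace is applied: from $(\qp\,\dol\qp)^* = (\dol\qp)^*\qp = \dolb\qp\cdot\qp$ and $\qp\,\dol\qp\,\qp = 0$ one has directly
\[
\qp\,\dol\qp \wedge (\qp\,\dol\qp)^* \;=\; \qp\,\dol\qp \wedge \dolb\qp\cdot\qp \;=\; \qp\,(\dol\qp\wedge\dolb\qp)\,\qp \;=\; \qp\,\dol\qp\wedge\dolb\qp,
\]
the last step because $\qp\,\dol\qp\,\qp\wedge\dolb\qp=0$. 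Thus $\int_h\qtr$ is applied only at the very end to an already-established equality of two-forms; no twisted cyclicity of $\qtr$, no $\sigma$-absorption, and no appeal to Lemma~\ref{lem0} are required here. Your steps (i)--(iii) already contain the complete argument; step (iv), with its threading of $\sigma$ through the trace ``as in Proposition~\ref{tpos}'', can simply be dropped.
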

\begin{proof}
Since entries of $\qp$ are in $\Apq$ they are invariant by the left action of $K$ and commute with both one-forms $\omega_{\pm}$. Then, using $\qp^2 = \qp$ and Leibniz rule for $X_{\pm}$ one has:
\begin{align}
\qp \dol \qp \wedge ( \qp \dolb \qp)^* &= \qp (\dol \qp \wedge \dolb \qp) \qp 
= \qp (\dol \qp \wedge \dolb \qp)   ,   \qquad \label{P}  \\[4pt]
\mbox{as well as}  \qquad  \qquad
\qp \dolb \qp \wedge ( \qp \dol \qp)^* &= \qp (\dolb \qp \wedge \dol \qp) \qp 
= \qp (\dolb \qp \wedge \dol \qp)  .   \qquad  \qquad \label{N} 
\end{align}
Next,  
\begin{align}\label{T}
 \qp\, \dd \qp \wedge \dd \qp 
= \qp\, ( \dol \qp \wedge \dolb \qp + \dolb \qp \wedge \dol \qp)   ,
\end{align}
while, using also \eqref{hod1}, 
\begin{align}\label{A}
\qp\, (\star \dd \qp) \wedge \dd \qp  
= \qp\, ( \dol \qp \wedge \dolb \qp - \dolb \qp \wedge \dol \qp)   .
\end{align}
Then, a comparison of \eqref{T} and \eqref{A} with \eqref{P} yields \eqref{ATP}, while comparing them with \eqref{N} yields \eqref{ATN}. 
\end{proof}

\begin{prop}\label{bound}
For any projection $\qp \in \Mat_{n+1}(\Apq)$ it holds that
\beq\label{dis}
\int_h \qtr(\qp\, (\star \dd \qp) \wedge \dd \qp) \geq \left | \int_h \qtr(\qp\, \dd \qp \wedge \dd \qp) 
\right |
\eeq
with equality when 
 \beq\label{sde}
 \star \qp\, \dd \qp = \pm \, \qp\, \dd \qp   .
 \eeq
\end{prop}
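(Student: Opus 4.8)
The plan is to extract both the inequality \eqref{dis} and the equality condition directly from the two identities of Proposition~\ref{apeq}. Abbreviate
\[
S := \int_h \qtr\big(\qp\,(\star\dd\qp)\wedge\dd\qp\big), \qquad
T := \int_h \qtr\big(\qp\,\dd\qp\wedge\dd\qp\big).
\]
By the twisted positivity of Proposition~\ref{tpos}, in the matrix-valued form recorded just after \eqref{qpos}, the right-hand side of \eqref{ATP} is $\ge 0$, so $S+T\ge 0$. The first step I would take is to prove the anti-holomorphic companion $\int_h \qtr\big(\qp\,\dolb\qp\,(\qp\,\dolb\qp)^{*}\big)\le 0$; fed into \eqref{ATN} this gives $S-T\ge 0$ as well. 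From $S\ge T$ and $S\ge -T$ one concludes $S\ge|T|$, which is \eqref{dis}.

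The anti-holomorphic \emph{negativity} is the one genuine computation, and it is the crux of the argument: treating that term as an ordinary norm square would reverse the bound, so the sign in \eqref{*d} is doing the essential work. It is the mirror image of the positivity established inside the proof of Proposition~\ref{tpos}, with the roles of $\omega_+$ and $\omega_-$ exchanged. Since the entries of $\qp$ lie in $\Apq$, write $\qp\,\dolb\qp = N\,\omega_-$ with $N\in\Mat_{n+1}(\Apq)$; then $\omega_-^{*}=-\omega_+$, the relation $(\dolb f)^{*}=\dol(f^{*})$ coming from \eqref{*d}, and the centrality of $\omega_-\wedge\omega_+$ give
\[
\qp\,\dolb\qp \wedge (\qp\,\dolb\qp)^{*} \;=\; -\,N\,N^{*}\,\omega_-\wedge\omega_+ ,
\]
hence $\int_h \qtr\big(\qp\,\dolb\qp\,(\qp\,\dolb\qp)^{*}\big)=-\,h\big(\qtr(N N^{*})\big)\le 0$, the sign coming from positivity of $h$ and of the diagonal matrix $\pi_{n/2}(K^{2})$ sitting inside $\qtr$. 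The same manipulation with $\qp\,\dol\qp=M\,\omega_+$ produces $\int_h \qtr\big(\qp\,\dol\qp\,(\qp\,\dol\qp)^{*}\big)=q^{2}\,h\big(\qtr(M M^{*})\big)\ge 0$, the opposite sign being forced by $\omega_+\wedge\omega_-=-q^{2}\,\omega_-\wedge\omega_+$; this is exactly the computation already carried out in the proof of Proposition~\ref{tpos}.

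For the equality in \eqref{dis}, saturation forces one of the non-negative numbers $S+T$, $S-T$ to vanish. If $S-T=0$ then $h\big(\qtr(N N^{*})\big)=0$, so faithfulness of the Haar state and positivity of $\qtr(N N^{*})\in\Apq$ give $\qtr(N N^{*})=0$, whence $N N^{*}=0$ (a positive matrix annihilated by $\qtr$ is zero) and therefore $N=0$, i.e.\ $\qp\,\dolb\qp=0$. Then $\qp\,\dd\qp=\qp\,\dol\qp$ lies in $\Mat_{n+1}(\Omega^{1,0}(\pq))$, so by \eqref{hod1} and the fact that $\star$ is a left $\Apq$-module map, $\star\,\qp\,\dd\qp=\qp\,(\star\dd\qp)=+\,\qp\,\dd\qp$; symmetrically $S+T=0$ gives $\qp\,\dol\qp=0$ and $\star\,\qp\,\dd\qp=-\,\qp\,\dd\qp$. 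Conversely, if $\star\,\qp\,\dd\qp=\pm\,\qp\,\dd\qp$, then by the splitting $\Omega^1(\pq)=\Omega^{1,0}(\pq)\oplus\Omega^{0,1}(\pq)$ of Proposition~\ref{2dsph}, on which $\star$ acts as $\pm 1$, the one-form $\qp\,\dd\qp$ is purely holomorphic, resp.\ anti-holomorphic, so that $\qp\,\dolb\qp=0$, resp.\ $\qp\,\dol\qp=0$; feeding this back into \eqref{ATN}, resp.\ \eqref{ATP}, makes $S-T=0$, resp.\ $S+T=0$, and hence $S=|T|$. This also exhibits the self-duality equations in the form $\qp\,\dol\qp=0$ or $\qp\,\dolb\qp=0$ promised in the introduction.
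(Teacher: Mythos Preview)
Your proof is correct and follows the paper's strategy: derive $S\pm T\ge 0$ from \eqref{ATP} and \eqref{ATN} and then characterize the equality case. In fact you are more thorough than the paper, which simply asserts that the right-hand side of \eqref{ATN} is nonnegative and for equality invokes only ``non-degeneracy of $\int_h\qtr$'', whereas you verify the anti-holomorphic sign explicitly and argue carefully via faithfulness of $h$ together with the positivity of $\qtr$ on positive matrices.
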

\begin{proof}
In the expressions \eqref{ATP} and  \eqref{ATN} the right hand side is always greater or equal than zero. Then, if $\int_h \qtr(\qp\, \dd \qp \wedge \dd \qp) \geq 0$ the first statement follows from \eqref{ATN}. On the other hand, if $\int_h \qtr(\qp\, \dd \qp \wedge \dd \qp) \leq 0$
the statement follows from \eqref{ATP}. 
Indeed, the left hand sides of \eqref{ATN} and \eqref{ATP} above are 
$$
\int_h \qtr \big(\star \qp\, \dd \qp \pm \qp\, \dd \qp \big) \wedge \dd \qp 
$$
and, since the integral $\int_h \qtr$ is non-degenerate, equality in \eqref{dis} is equivalent to one of the two equations in \eqref{sde} to be satisfied. 
\end{proof}
A further look at \eqref{ATN} and \eqref{ATP} also shows that 
 \begin{align}\label{sdeBis}
\star \qp\, \dd \qp = - \, \qp\, \dd \qp \, \qquad & \iff \qquad \qp \, \dol \qp = 0   , \nn \\[4pt]
\mbox{and} \qquad 
\star \qp\, \dd \qp = + \, \qp\, \dd \qp \, \qquad & \iff \qquad  \qp \, \dolb \qp = 0    .
 \end{align}
And, it is worth noticing that from the equality \eqref{T} it follows that:
\begin{align}\label{sign}
\int_h \qtr(\qp\, \dd \qp \wedge \dd \qp) =
\int_h \qtr \qp\, \dolb \qp \wedge \dol \qp 
& \leq 0 \quad \mbox{if} \quad \qp\, \dol \qp = 0  \nn \\[4pt]
\int_h \qtr(\qp\, \dd \qp \wedge \dd \qp) =
\int_h \qtr \qp\, \dol \qp \wedge \dolb \qp 
& \geq 0 \quad \mbox{if} \quad \qp\, \dolb \qp = 0   .
\end{align}

\subsection{The twisted action functional}
The quantity on the left hand side of \eqref{dis} is positive and qualifies 
as the `energy' of the projection $\qp$. We have then the following: 
\begin{defi}
On the configuration space made of all projections $\qp \in \Mat_{N}(\Apq)$ 
we consider the $\IR^+$-valued action-functional
\begin{align}\label{actfun}
S_\sigma[\qp] & := \frac{1}{2} \int_h \qtr \, \qp \, (\star \dd \qp) \wedge \dd \qp   .
\end{align}
\end{defi}

We shall not write explicitly the rather complicate equations for the critical points. 
On the other hand, from the consideration above, a class of critical points is obtained when 
the projections $\qp$ satisfies one of the two equations in \eqref{sde} or \eqref{sdeBis}. 
Solutions of these \emph{self-dual/anti-self-dual} equations will be named (\emph{twisted}) 
\emph{sigma-model instantons} or \emph{anti-instantons}, respectively. When this is the case, from the relation \eqref{dis}, the action functional equals the absolute value of a `topological' quantity, given by
\beq\label{top}
\mbox{Top}_\sigma(\qp) = \frac{1}{2} \int_h \qtr(\qp\, \dd \qp \wedge \dd \qp)   ,
\eeq
in parallel with the case for noncommutative tori and Moyal plane of \cite{DKL00,DKL03} 
and \cite{DLL15}.

We shall end this part by giving details for explicit expression to be used later on when looking for solutions. First, a direct computation shows that 
\begin{align}\label{ddb}
\dol \qp \wedge \dolb \qp &= - (X_+ \lt \qp) (X_- \lt \qp) \, \omega_- \wedge \omega_{+} \nn \\[4pt]
& = q^{-2} (X_+ \lt \qp) (X_+ \lt \qp)^* \, \omega_- \wedge \omega_{+} =
q^{2} (X_- \lt \qp)^* (X_- \lt \qp) \, \omega_- \wedge \omega_{+}
\end{align} using \eqref{xstar}, and similarly  
\begin{align}\label{dbd}
\dolb \qp \wedge \dol \qp &= q^{2} (X_- \lt \qp) (X_+ \lt \qp) \, \omega_- \wedge \omega_{+} 
\nn \\[4pt]
& = - (X_+ \lt \qp)^* (X_+ \lt \qp) \, \omega_- \wedge \omega_{+} =
- q^{4} (X_- \lt \qp) (X_- \lt \qp)^* \, \omega_- \wedge \omega_{+}   .
\end{align}  
Then, using both \eqref{ddb} and \eqref{dbd}, one computes
\begin{align*}
 \qp\, \dd \qp \wedge \dd \qp 
& = \qp\, ( \dol \qp \wedge \dolb \qp + \dolb \qp \wedge \dol \qp) \nn \\[4pt]
& = \qp\, \left[  q^{-2} \, (X_+ \lt \qp) (X_+ \lt \qp)^* - (X_+ \lt \qp)^* (X_+ \lt \qp) \right] \, \omega_- \wedge \omega_{+}   ,
\end{align*}
as well as, using also \eqref{hod1}, 
\begin{align*}
\qp\, (\star \dd \qp) \wedge \dd \qp  
& = \qp\, ( \dol \qp \wedge \dolb \qp - \dolb \qp \wedge \dol \qp) \nn \\[4pt]
& = \qp\, \left[  q^{-2} \, (X_+ \lt \qp) (X_+ \lt \qp)^* + (X_+ \lt \qp)^* (X_+ \lt \qp) \right] \, \omega_- \wedge \omega_{+}   .
\end{align*}
All these computations yield for the action-functional:
\begin{align}\label{actfun1}
S_\sigma[\qp]
& = \frac{1}{2} \int_h \qtr \, \qp \left[ q^{-2} \qp \, (X_+ \lt \qp) (X_+ \lt \qp)^* + (X_+ \lt \qp)^* (X_+ \lt \qp) \right] \, \omega_- \wedge \omega_{+} \nn \\[4pt]
& = \frac{1}{2} \int_h \qtr \, \qp \left[ q^{-2} \qp \, (X_+ \lt \qp) (X_+ \lt \qp)^* + 
q^{4} (X_- \lt \qp) (X_- \lt \qp)^* \right] \, \omega_- \wedge \omega_{+}   .
\end{align}
Whereas for the topological term:
 \begin{align}\label{top1}
\mbox{Top}_\sigma(\qp) & = \frac{1}{2} \int_h \qp\, \left[  q^{-2} \, (X_+ \lt \qp) (X_+ \lt \qp)^* - (X_+ \lt \qp)^* (X_+ \lt \qp) \right] \, \omega_- \wedge \omega_{+} \nn \\[4pt]
& = \frac{1}{2} \int_h \qtr \, \qp \left[ q^{-2} \qp \, (X_+ \lt \qp) (X_+ \lt \qp)^* - 
q^{4} (X_- \lt \qp) (X_- \lt \qp)^* \right] \, \omega_- \wedge \omega_{+}   .
\end{align}

\subsection{Twisted sigma-models}
Much of what is written in the previous section does not depend on the details of the geometry of the quantum projective line. We shall try and extracts the minimal requirements for a generalisation. 

One starts with an algebra $\ca$ endowed with a complex structure, that is a breaking of a two-dimensional differential calculus as in Proposition~\ref{2dsph}: 
$$
\Omega^{\bullet}(\ca) = \ca \, \oplus \, \left(\Omega^{1,0}(\ca) \oplus\Omega^{0,1}(\ca) \right) 
\, \oplus \, \ca \, \beta \ ,
$$
with a top central two-form $\beta$ so that $\Omega^{1,1}(\ca)\simeq \ca$.  
The differential is decomposed as $\dd = \dol + \dolb$ with $\dol^2 = \dolb^2 = 
\dol \circ \dolb + \dolb \circ \dol = 0$ (so that $\dd^2=0$), 
with holomorphic forms 
$\Omega^{1,0}(\ca) \simeq\dol(\ca)$ and anti-holomorphic forms
$\Omega^{0,1}(\ca) \simeq\dolb(\ca)$.  

There is a Hodge operator on one-forms  
defined by requiring that 
$$
\star (\dol f) = \dol f  \quad \mbox{and} \quad 
\star (\dolb f) = - \dolb f 
$$
for all $f\in \ca$ 
and extended as a bimodule map. 
Thus $\star$ has values $\pm\, 1$ on holomorphic or anti-holomorphic
one-forms respectively (squaring to the identity). 

Next, there is a normalized positive state $h : \ca \to \IC$, which is twisted, 
that is   
$$ 
h(x y)= h(\sigma (y)x) , \qquad \textup{for} \quad x, y \in \ca   ,
$$ 
for a modular automorphism $\sigma \in \Aut(\ca)$ and such that one can construct a 
non-trivial twisted cyclic $2$-cocycle $\tau$ on $\ca$. This is obtained via a linear functional 
$$
\int_h  : \;\; \Omega^2(\ca) \to \IC, \qquad \int_h a \, \beta := h(a), 
$$
by considering
$$
\tau(a_0,a_1,a_2):= \frac{1}{2} \int_h a_0\, \dd a_1 \wedge \dd a_2   .
$$
This twisted cyclic cocycle $\tau$ is required to be Hochschild-cohomologous to a 
positive twisted Hochschild $2$-cocycle $\varphi$ on $\ca$ defined, by using the complex structure, as
$$
\varphi (a_0, a_1, a_2)=\int_h a_0\, \dol a_1 \, \dolb a_2   .
$$
Finally, one needs a a quantum trace, that is a map $\qtr: \Mat_{N}(\ca) \to \IC$ which is twisted 
by the automorphism $\sigma$, that is 
$$
\qtr(M_1 M_2 ) = \qtr\left( \sigma(M_2) M_1 \right)   .
$$

With all this ingredients, for a projection all projections $\qp \in \Mat_{N}(\ca)$, 
both action-functional $S_\sigma[\qp]$ in \eqref{actfun} and the topological term $\mbox{Top}_\sigma(\qp)$ in \eqref{top} make sense.  Also, both Propositions \ref{apeq} and \ref{bound} are valid and one still has self-duality equations as in \eqref{sde}. 

\section{Connections and holomorphic structures}\label{se:chs}
Before we analyse solutions of the self-duality equations \eqref{sde} or \eqref{sdeBis} we need to recall additional ingredients of the geometry of the quantum-projective line. 

Firstly, the calculus $\Omega^\bullet(\pq)$ is lifted to a 
connection on the quantum principal bundle over $\pq$. 
It leads to
a natural holomorphic structure in the sense that the corresponding covariant derivative is decomposed
into a holomorphic and an anti-holomorphic part. 

\subsection{Connections on equivariant line bundles}\label{se:con}
A connection on the principal bundle over the quantum projective line $\pq$, is a 
covariant splitting of the module of one-forms $\Omega^1(\SU)=\Omega^1_{\mathrm{ver}}(\SU) \oplus\Omega^1_{\mathrm{hor}}(\SU)$. Equivalently, it can be given by a covariant left module 
projection $\Pi : \Omega^1(\SU) \to \Omega^1_{\mathrm{ver}}(\SU)$, i.e.
$\Pi^2=\Pi$ and $\Pi(x \,\alpha) = x\, \Pi(\alpha)$ for
$\alpha\in\Omega^1(\SU)$ and $x \in\mathcal{A}(\SU)$. 
Covariance is the requirement that 
$$
\alpha_{R}^{(1)}\circ \Pi= \Pi \circ \alpha_{R}^{(1)} \ , 
$$ 
with $\alpha_{R}^{(1)}$ the extension to one-forms of the action
$\alpha_{R}$ in \eqref{rco} of the structure group
$\U(1)$. With the left-covariant
three-dimensional calculus on $\ASU$, a basis for
$\Omega^1_{\mathrm{hor}}(\SU)$ is given by
$\omega_{-}$ and $\omega_{+}$. Also, one has
$$
\alpha_{R}^{(1)}(\omega_{z}) = \omega_{z}  \ , \qquad
\alpha_{R}^{(1)}(\omega_{-}) = \omega_{-}\ z^{*}\,^{2} \qquad
\mbox{and} \qquad
\alpha_{R}^{(1)}(\omega_{+}) = \omega_{+}\ z^{2} \ ,
$$
and so a natural choice of connection $\Pi=\Pi_z$ is to define
$\omega_{z}$ to be vertical~\cite{BM93,maj05}, whence
$$
\Pi_{z}(\omega_{z}):=\omega_{z} \quad \mbox{and} \quad
\Pi_{z}(\omega_{\pm}):=0 \ . 
$$
For the associated covariant derivative on right $\Apq$-modules of equivariant elements,  
$$
\nabla:= (\id - \Pi_{z}) \circ \dd\,:\,
\ce ~\longrightarrow~ \ce\otimes_{\Apq} \Omega^1(\pq)  \ , 
$$
one readily proves the Leibniz rule $\nabla(\varphi\cdot f)=
(\nabla\varphi)\cdot f+\varphi\otimes \dd f$ for $\varphi\in\ce$
and $f\in\Apq$.
With the left-covariant two-dimensional calculus on $\Apq$ given in \S\ref{se:cals2} 
(and coming from the left-covariant three-dimensional calculus on $\ASU$ ), we have
then
\beq\label{coder2d}
\nabla \varphi= \left(X_{+}\triangleright\varphi\right)\,\omega_{+}
+\left(X_{-}\triangleright\varphi\right)\,\omega_{-}   . 
\eeq
Given the connection, one can work out an explicit expression for its
curvature, defined to be the $\Apq$-linear (by construction) map 
$$ 
\nabla^2:=\nabla\circ\nabla \, : \, \cl_n ~ \longrightarrow ~ \cl_n 
\otimes_{\Apq}\Omega^2(\pq) \ .
$$
Using \eqref{coder2d}, \eqref{commc3} and the fact that
$\dd\omega_\pm=0$ on $\pq$, by the Leibniz rule one has
\begin{align*}
\nabla \big(\nabla \varphi \big)
&=(X_-\,X_+\lt\varphi)\,\omega_-\wedge\omega_++
(X_+\,X_-\lt\varphi)\,\omega_+\wedge\omega_- \\[4pt]
&= \big((X_-\,X_+-q^{2}\,X_+\,X_-)\lt\varphi\big)\,\omega_-\wedge\omega_+ = \left(X_z \lt \varphi\right)\,\omega_-\wedge\omega_+  , 
\end{align*}
the last quality coming from the relation $X_-\,X_+-q^{2}\,X_+\,X_-=X_z$. 
As expected, the curvature is $\Apq$-linear, as it also follows by observing that $X_z\lt\Apq=0$.

Things become a bit more transparent when taking $\ce=\cl_n$. Then
$X_\pm\lt\varphi\in\cl_{n\pm2}$ for $\varphi\in\cl_n$ and from \eqref{masu} one may conclude that, as required,
$$
\nabla\varphi \ \in \ \cl_{n-2}   \, \omega_- \oplus \cl_{n+2}   \,
\omega_+ \simeq  \cl_{n} \otimes_{\Apq}  \Omega^1(\pq)   .
$$
We can also derive an explicit expression for the corresponding connection one-form 
$\asf_n$ defined by $\varphi\asf_n = \nabla \varphi - \dd
\varphi$ for $\varphi\in\cl_n$. With $X_z$ the vertical vector field
in \eqref{vvf}, using \eqref{exts3} and \eqref{coder2d} one finds
$\varphi\asf_n =  -\left(X_z \lt \varphi\right) \,\omega_z =  q^{n+1}\, [n] \,\varphi\, \omega_z$,
or 
$$
\asf_n=q^{n+1}\, [n] \, \omega_z \ . 
$$
As usual, $\asf_n$ is \emph{not} defined on $\pq$ but rather on the
total space $\SU$ of the bundle, i.e. $ \asf_n \in
\mathrm{Hom}_{\Apq}(\cl_n,\cl_n\otimes_{\Apq} \Omega^1(\SU) )$. 
In terms of it, as a direct consequence of the first identity in \eqref{dformc3}, 
the curvature two-form is then given by
$$
\fsf_n:=\nabla^2 = \dd \asf_n = - q^{n+1}\, [n] \, \omega_{-}\wedge\omega_{+}   ,
$$
using the first relation in \eqref{dformc3}; 
and now $\fsf_n \in \mathrm{Hom}_{\Apq}(\cl_n,\cl_n\otimes_{\Apq} \Omega^1(\pq))$. 

\subsection{Holomorphic structures}\label{se:hol}
The connection in \eqref{coder2d} lends itself to be naturally decomposed
into a holomorphic and an anti-holomorphic part, $\nabla =
\nabla^{\dol} +\nabla^{\dolb}$. These are:  
\beq\label{coher}
\nabla^{\dol} \varphi  =  \left(X_{+}\lt \varphi\right)\,\omega_{+}
\quad \mbox{and} \quad
\nabla^{\dolb} \varphi  =
\left(X_{-}\triangleright\varphi\right)\,\omega_{-}   ,
\eeq
with the corresponding Leibniz rules: for all $\varphi\in\ce$ and $f\in\Apq$ it holds that, 
$$
\nabla^{\dol}(\varphi\cdot f) = \big(\nabla^{\dol}\varphi\big)\cdot f
+\varphi\otimes\dol f \quad \mbox{and} \quad
\nabla^{\dolb}(\varphi\cdot f) = \big(\nabla^{\dolb}\varphi\big)\cdot
f+ \varphi\otimes \dolb f \ .
$$

They are both flat, that is,
$
(\nabla^{\dol})^2=0=(\nabla^{\dolb}\,)^2   .
$ 
Holomorphic `sections' are elements $\varphi\in\ce$ which satisfy
$
\nabla^{\dolb} \varphi = 0 \  
$
and we denotes the collection of them as $H^0(\ce, \nabla^{\dolb})$. 

For $\ce=\cl_n$, results on holomorphic sections were obtained in~\cite{KLvS}.
{}From the actions given in \eqref{lact} we see that $F\lt a^{s} =0$ and
$F\lt c^{s} =0$ for any integer $s\in\IN$, while $F\lt a^{*}\,^{s} \not= 0$
and $F\lt c^{*}\,^{s} \not= 0$ for any $s\in\IN$. Then, from the
expressions \eqref{eqmap} for generic equivariant elements, we see that
there are no holomorphic elements in $\cl_n$ for $n>0$. On the other
hand, for $n\leq 0$ the elements $a^{\mn-\mu}\, c^{\mu}$ for $\mu = 0,1, \dots,\mn$,  are holomorphic:
\beq\label{holn}
\nabla^{\dolb} \big( a^{\mn-\mu}\, c^{\mu} \big) = 0 \ . 
\eeq
Since $\ker \dolb= \IC$ (as only the constant functions on $\pq$ do
not contain the generator $a^*$ or $c^*$), so that the only
holomorphic functions on $\pq$ are the constants \cite{KLvS}, these are the only
invariants in degree $n$. We may conclude that holomorphic equivariant
elements are all polynomials in two variables $a,c$ with the
commutation relation $a\,c = q\,c\,a$, which defines the coordinate
algebra of the quantum plane. We have the following:
\begin{theo}\label{holsec}
Let $n$ be a positive integer. Then
$$
H^0(\cl_n, \nabla^{\dolb}) = 0,  \quad \mbox{while} \quad H^0(\cl_{-n}, \nabla^{\dolb}) 
\simeq \IC^{n+1}   .
$$ 
Moreover the space $R:=\bigoplus_{n\geq 0} H^0(\cl_{-n}, \nabla^{\dolb})$ has a ring structure and is isomorphic to 
$$
R \simeq \IC \langle a,c \rangle /  (ac - q ca)   .
$$
\end{theo}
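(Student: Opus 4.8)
The plan is to reduce the holomorphicity equation $\nabla^{\dolb}\varphi=0$ to the $\su$-invariance condition $F\lt\varphi=0$, and then to compute $\{\varphi\in\cl_n:F\lt\varphi=0\}$ for every $n$ using the grading $\ASU=\bigoplus_n\cl_n$ together with the representation theory of $\su$. For the reduction: by \eqref{coher} one has $\nabla^{\dolb}\varphi=(X_-\lt\varphi)\,\omega_-$, and since $\{\omega_z,\omega_-,\omega_+\}$ is a free left $\ASU$-module basis of $\Omega^1(\SU)$, this vanishes iff $X_-\lt\varphi=0$; using $X_-=q^{-1/2}FK$ together with the fact that $K$ acts on $\cl_n$ as the invertible scalar $q^{n/2}$ (see \eqref{libudual}), one gets $X_-\lt\varphi=q^{(n-1)/2}\,F\lt\varphi$, so that $H^0(\cl_n,\nabla^{\dolb})=\{\varphi\in\cl_n:F\lt\varphi=0\}$.

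Next I would exhibit the obvious holomorphic elements. From \eqref{lact}, $F\lt a=F\lt c=0$; since $\cop(F)=F\otimes K+K^{-1}\otimes F$, the left action obeys the twisted Leibniz rule $F\lt(xy)=(F\lt x)(K\lt y)+(K^{-1}\lt x)(F\lt y)$, so $F$ annihilates the whole subalgebra $\mathcal{P}\subseteq\ASU$ generated by $a$ and $c$. In $\ASU$ these generators satisfy $ac=qca$ and the monomials $a^{i}c^{j}$ are linearly independent (they are part of a Poincar\'e--Birkhoff--Witt basis of $\ASU$), so $\mathcal{P}\simeq\IC\langle a,c\rangle/(ac-qca)$; moreover $a,c\in\cl_{-1}$, so $\mathcal{P}=\bigoplus_{m\ge0}(\mathcal{P}\cap\cl_{-m})$ with $\mathcal{P}\cap\cl_{-m}$ spanned by $\{a^{m-\mu}c^{\mu}:0\le\mu\le m\}$, of dimension $m+1$, and $\mathcal{P}\cap\cl_{n}=0$ for $n>0$. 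This already gives the inclusions $\mathcal{P}\cap\cl_{-m}\subseteq H^0(\cl_{-m},\nabla^{\dolb})$, hence the lower bounds.

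The main obstacle is the matching upper bound: that there are no further holomorphic sections, and none at all in positive degree. This is exactly what was established in \cite{KLvS}; the argument I would give uses the Peter--Weyl decomposition of $\ASU$ as a left $\su$-module into finite-dimensional irreducibles, with the spin-$J$ module $V_J$ occurring with multiplicity $2J+1$. Every $\varphi\in\cl_n$ lies in the $K$-weight space $q^{n/2}$, i.e.\ the weight-$n/2$ space in the notation of \eqref{eq:uqsu2-repns}; if it is also killed by $F$ then in each irreducible summand it is a lowest weight vector, which forces $n/2=-J$. Hence for $n>0$ there is no nonzero such $\varphi$, so $H^0(\cl_n,\nabla^{\dolb})=0$, while for $n=-m\le 0$ the space of such $\varphi$ has dimension equal to the multiplicity of $V_{m/2}$, namely $m+1$; combined with the previous step this forces $H^0(\cl_{-m},\nabla^{\dolb})=\mathcal{P}\cap\cl_{-m}\simeq\IC^{m+1}$. (Alternatively one simply imports the equality $\dim H^0(\cl_{-m},\nabla^{\dolb})=m+1$ from \cite{KLvS}, after which the identification with $\mathcal{P}\cap\cl_{-m}$ is immediate from the previous step.)

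It remains to read off the ring structure. By the above, $R=\bigoplus_{m\ge0}H^0(\cl_{-m},\nabla^{\dolb})=\bigoplus_{m\ge0}(\mathcal{P}\cap\cl_{-m})=\mathcal{P}$ as graded subspaces of $\ASU$, and $R$ is closed under the product of $\ASU$: if $F\lt\varphi=F\lt\psi=0$ with $\varphi\in\cl_{-m}$ and $\psi\in\cl_{-m'}$, then $\varphi\psi\in\cl_{-m}\cl_{-m'}=\cl_{-m-m'}$ and $F\lt(\varphi\psi)=0$ by the twisted Leibniz rule, so $\varphi\psi$ is again holomorphic. Thus $R$ is a subalgebra of $\ASU$, equal to $\mathcal{P}$, and the presentation obtained above gives $R\simeq\IC\langle a,c\rangle/(ac-qca)$. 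Everything except the upper bound of the third step (or its import from \cite{KLvS}) is routine bookkeeping with the $\IZ$-grading and the Leibniz rule.
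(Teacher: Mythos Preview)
Your argument is correct, and it takes a genuinely different route from the one sketched in the paper. The paper (following \cite{KLvS}) argues via the bimodule description of $\cl_n$ in \eqref{eqmap}: the generating elements $a^{\mn-\mu}c^{\mu}$ for $n\le 0$ are manifestly annihilated by $F$, and the ``upper bound'' is extracted from the auxiliary fact that $\ker\dolb\big|_{\Apq}=\IC$ (i.e.\ the only holomorphic functions on $\pq$ are the constants), which forces the coefficient functions of any holomorphic section to be constant. By contrast, you bypass $\ker\dolb=\IC$ entirely and obtain both the vanishing for $n>0$ and the exact dimension for $n\le 0$ in one stroke from the Peter--Weyl decomposition of $\ASU$ as a left $\su$-module, identifying holomorphic sections with lowest-weight vectors in the $V_{n/2}$-isotypic component. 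Your method is cleaner in that it needs no separate lemma on holomorphic functions (indeed it yields $\ker\dolb=\IC$ as the special case $n=0$), while the paper's approach has the virtue of being phrased directly in terms of the holomorphic calculus on $\pq$ and generalises more readily to situations where a Peter--Weyl decomposition of the total space is not available. Your handling of the ring structure is the same as the paper's.
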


Thus there is a quantum homogeneous coordinate ring for $\pq$ that results into the 
coordinate ring of the quantum plane for the variable $a$ and $c$ with relation $ac - q ca=0$.  

In a similar fashion one shows that:
\beq\label{aholn}
\nabla^{\dol} \big( a^{*}\,^{ \mu}\,c^{*}\,^{ n-\mu} \big) = 0 \ , 
\eeq
so that 
the elements $a^{*}\,^{ \mu}\,c^{*}\,^{ n-\mu}$ for $\mu = 0,1, \dots,n$,  are anti-holomorphic. 

\section{Twisted sigma-model solitons}
In computing the integral of the curvature on bundles over the quantum projective line in \cite{LRZ09} we already come across to the equations \eqref{sdeBis} for a class of solutions, albeit not thinking or interpreting them as noncommutative instantons. 
We shall now work out explicitly solutions of the self-duality equations \eqref{sde} by using results from \cite{KLvS} (on the holomorphic structure on $\pq$ described in Sect.~\ref{se:hol} above), in a way that can be extended to more general quantum projective spaces $\pqn$ in any dimension. 

\subsection{Explicit solutions}
The line bundles in \eqref{libudual} can be realised as right modules: $\cl_n\simeq \qp (\Apq)^{\mn+1}$ with projections $\qp$ written explicitly \cite{BM98,HM98}.
These are elements $\qpn$ and $\qpp$ in $\Mat_{\mn+1}(\Apq)$ (for $n\geq0$ and $n\leq0$ respectively) whose explicit form we give now following \cite{LRZ09} 
(if exchanging the sign of the label $n$ there due to the passage from left-modules there to right ones here). Consider then vector valued functions 
 \begin{align*}
 (\Phi_{(n)})_{\mu} &= \sqrt{\alpha_{n,\mu}} \,\, c^{n-\mu}a^{\mu}   , \quad &\mu=0, 1, \dots, n   , 
\qquad \mbox{for} \quad n \geq 0 \nonumber  \\[4pt]
(\Psi_{(n)})_{\mu} & = \sqrt{\beta_{n,\mu}} \,\, c^{* \mu}a^{* \mn-\mu}   , 
\quad &\mu=0, 1, \dots, \mn   , 
\qquad \mbox{for} \quad n \leq 0   ,
 \end{align*}  
with coefficients (and convention $\prod_{j=0}^{-1}(\cdot)=1$) 
\begin{align*}
\alpha_{n,\mu} =\prod\nolimits_{j=0}^{n-\mu-1}\left(\frac{1-q^{2\left(n-j\right)}}
{1-q^{2\left(j+1\right)}}\right) \quad \mbox{and} \quad 
\beta_{\mn,\mu} =q^{2\mu}\prod\nolimits_{j=0}^{\mu-1}\left(\frac{1-q^{-2\left(\mn-j\right)}}
{1-q^{-2\left(j+1\right)}}\right),
\end{align*}
for $\mu = 0, \dots, n$ or $\mu = 0, \dots, \mn$ respectively. These coefficients are such that
\begin{align}\label{id2}
 \Phi_{(n)}^* \, \Phi_{(n)} &= \sum\nolimits_{\mu=0}^{n} (\Phi_{(n)}^*)_{\mu} (\Phi_{(n)})_{\mu} 
 \nn\\
 & =
\sum\nolimits_{\mu=0}^{\mn} 
 \alpha_{n,\mu}\, a^{* \mu} c^{* n-\mu} c^{n-\mu} a^{\mu} = (a^{*}a+c^{*}c)^{n} = 1,
\end{align}
and analogously
\begin{align}\label{id1}
\Psi_{(n)}^* \Psi_{(n)} 
&= \sum\nolimits_{\mu=0}^{\mn} (\Psi_{(n)}^*)_\mu(\Psi_{(n)})_\mu \nn \\
& = \sum\nolimits_{\mu=0}^{\mn} 
 \beta_{n,\mu}\, a^{\mn-\mu} c^{\mu} c^{* \mu} a^{*\mn-\mu} =
 (aa^{*}+q^{2}cc^{*})^{\mn} = 1 .
\end{align}

\noindent
We have then projections: 
\begin{align}\label{qpron}
\qpn &:= \Phi_{(n)} \Phi_{(n)}^*, \qquad \mbox{for} \quad n\geq 0, \nn \\[4pt]
\qpn_{\mu\nu} &\:= \sqrt{\alpha_{n,\mu}\alpha_{n,\nu}} \,\, c^{n-\mu}a^{\mu}
a^{*\nu}c^{*n-\nu} \in \Apq ,  
\end{align}
\begin{align}\label{qpro}
\mbox{and }\qquad \qpp &:= \Psi_{(n)} \Psi_{(n)}^* , \qquad  \mbox{for} \quad n\leq 0 , \nn \\[4pt]
\qpp_{\mu\nu} &\:=\sqrt{\beta_{n,\mu}\beta_{n,\nu}} \,\, 
c^{*\mu}a^{*\mn-\mu}a^{\mn-\nu}c^{\nu} \in \Apq . \qquad\qquad
\end{align}


\noindent
By construction, they are idempotents: $(\qpn)^2=\qpn$ and $(\qpp)^2=\qpp$ 
(and self-adjoint).

To following result is in the proof of \cite[Lemma~4.3]{LRZ09}. We shall re-derive the result by more direct methods using the holomorphic structure on the bundles $\cl_n$ given in Sect.~\ref{se:hol}.

\begin{lemm}\label{lemproj}
With the two-dimensional calculus on the projective line $\pq$ given in Sect.~\ref{se:cals2}, 
if $\qpn$ is the projection given in \eqref{qpron}, it holds that
\beq\label{antisf}
\qpn \, \dol \qpn = 0   , 
\eeq
while if $\qpp$ is the projection given in \eqref{qpro}, it holds that
\beq\label{sf}
\qpp\,  \dolb \qpp = 0   . 
\eeq
\end{lemm}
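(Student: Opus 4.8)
The plan is to exploit the factorized form of the projections together with the key identities \eqref{id2} and \eqref{id1} and the holomorphic/anti-holomorphic structure of Section~\ref{se:hol}. For the first claim, write $\qpn = \Phi_{(n)}\Phi_{(n)}^*$ with the column vector $\Phi_{(n)}$ whose components are $\sqrt{\alpha_{n,\mu}}\, c^{n-\mu}a^{\mu}$. Since $\dol$ is a bimodule derivation and $\Phi_{(n)}^*\Phi_{(n)} = 1$ by \eqref{id2}, one computes
\begin{align*}
\qpn\, \dol \qpn &= \Phi_{(n)}\Phi_{(n)}^*\, \dol\!\left(\Phi_{(n)}\Phi_{(n)}^*\right) \\[4pt]
&= \Phi_{(n)}\Phi_{(n)}^*\,(\dol \Phi_{(n)})\,\Phi_{(n)}^* + \Phi_{(n)}\big(\Phi_{(n)}^*\Phi_{(n)}\big)\,\dol(\Phi_{(n)}^*) \\[4pt]
&= \Phi_{(n)}\Phi_{(n)}^*\,(\dol \Phi_{(n)})\,\Phi_{(n)}^* + \Phi_{(n)}\,\dol(\Phi_{(n)}^*).
\end{align*}
The first step I would take is therefore to show that each component of $\Phi_{(n)}$ is \emph{holomorphic} in the sense that $\dol(\Phi_{(n)})_\mu$ sits in $\Omega^{0,1}$ only — more precisely, that $\nabla^{\dol}\Phi_{(n)} = 0$. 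But recall from \eqref{aholn} that $\nabla^{\dol}\big(a^{*\,\mu}c^{*\,n-\mu}\big)=0$; the entries of $\Phi_{(n)}$ are instead monomials in $a,c$, which by \eqref{holn} satisfy $\nabla^{\dolb}\Phi_{(n)}=0$. So I need to be careful about which projection carries which vanishing: for $\qpn$ (built from $a,c$) the natural statement is $\qpn\,\dolb\qpn$-type, and it is the $\dol$-equation that requires the extra argument. Let me restructure.

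The cleaner route is to use the structure of the modules directly. The column vector $\Phi_{(n)}$ is a frame for $\cl_n$ realised inside $(\Apq)^{n+1}$, and the covariant derivative on $\cl_n$ induced by $\qpn$ (via $\nabla_\qp = \qp\circ\dd$) agrees, under the identification $\cl_n\simeq\qpn(\Apq)^{n+1}$, with the Grassmann connection. Then $\qpn\,\dol\qpn\,\Phi_{(n)}$ computes (the $(1,0)$-part of) the covariant derivative of the frame vector $\Phi_{(n)}$ itself. Now $\Phi_{(n)}$ has components that are monomials in $a$ and $c$ alone, hence by Theorem~\ref{holsec} they are holomorphic sections, i.e. $\nabla^{\dolb}\Phi_{(n)}=0$. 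This gives $\qpp$-style vanishing. For $\qpn$ one argues dually: its entries $\qpn_{\mu\nu}$ from \eqref{qpron} contain $a^{*\nu}c^{*n-\nu}$ on the right, which by \eqref{aholn} are \emph{anti-holomorphic}, $\nabla^{\dol}(a^{*\nu}c^{*n-\nu})=0$. Concretely, I would write $\dol\qpn = \dol(\Phi_{(n)}\Phi_{(n)}^*) = (\dol\Phi_{(n)})\Phi_{(n)}^* + \Phi_{(n)}(\dol\Phi_{(n)}^*)$ and use $(\dol\Phi_{(n)})^* = \dolb\Phi_{(n)}^*$ from \eqref{*d}; then
$$
\qpn\,\dol\qpn = \Phi_{(n)}\Phi_{(n)}^*(\dol\Phi_{(n)})\Phi_{(n)}^* + \Phi_{(n)}\,\dol(\Phi_{(n)}^*),
$$
and I would show the two terms cancel by differentiating the normalization identity $\Phi_{(n)}^*\Phi_{(n)}=1$ from \eqref{id2}, which gives $(\dol\Phi_{(n)}^*)\Phi_{(n)} + \Phi_{(n)}^*\,\dol\Phi_{(n)}=0$, so $\Phi_{(n)}^*\dol\Phi_{(n)} = -(\dol\Phi_{(n)}^*)\Phi_{(n)}$. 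The anti-holomorphicity $\dol(\Phi_{(n)}^*) = \dol(a^{*}\text{-monomials})\ \propto\ \omega_+$-terms living in $\Omega^{1,0}$, combined with the cancellation, yields $\qpn\,\dol\qpn = 0$. The statement \eqref{sf} for $\qpp$ is proved identically with $\dol\leftrightarrow\dolb$, $a\leftrightarrow a^*$, $c\leftrightarrow c^*$, using \eqref{id1} and \eqref{holn} instead.

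The main obstacle I anticipate is bookkeeping rather than conceptual: one must track the noncommutativity of the $\omega_\pm$ past the algebra elements (the commutation rules \eqref{bi1}) and make sure the matrix factors multiply in the right order when inserting $\Phi_{(n)}^*\Phi_{(n)}=1$ in the middle — this identity is what collapses the troublesome middle term, and it must be applied to the \emph{adjacent} pair, which is legitimate precisely because $\Omega^{1,0}(\pq)$ is a bimodule and the entries of $\Phi_{(n)}$ lie in $\Apq$ on which $K$ acts trivially. A secondary point to verify carefully is the interplay of $\dol$ with the $\sqrt{\alpha_{n,\mu}}$ scalar coefficients (these are constants, so $\dol$ passes through them trivially) and the claim that $\dol$ of a monomial purely in $a^*,c^*$ lands in $\Omega^{1,0}$ — this is exactly \eqref{aholn} from Section~\ref{se:hol}, so no new work is needed there. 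Once the middle-term cancellation is in place, the remaining term is manifestly zero because $\Phi_{(n)}\,\dol(\Phi_{(n)}^*)$ is, up to the cancelled piece, the full $\qpn\,\dol\qpn$, and it vanishes by the anti-holomorphicity of the adjoint frame.
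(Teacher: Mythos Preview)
Your approach is essentially the paper's: both use the factorization $\qpn=\Phi_{(n)}\Phi_{(n)}^*$, the Leibniz rule, the normalization $\Phi_{(n)}^*\Phi_{(n)}=1$ from \eqref{id2}, and the anti-holomorphicity $\nabla^{\dol}\Phi_{(n)}^*=0$ from \eqref{aholn} (the paper works at the level of $X_+$ with the twisted Leibniz rule \eqref{cotb}, which is equivalent to your ordinary Leibniz for $\dol$ once the bimodule relations \eqref{bi1} absorb the $K^2$ twist). One point to sharpen: the mechanism is not a cancellation of two nonzero pieces but that \emph{both} terms vanish outright once you invoke $\dol\Phi_{(n)}^*=0$ --- the second term $\Phi_{(n)}\,\dol\Phi_{(n)}^*$ is immediately zero, and differentiating the normalization gives $\Phi_{(n)}^*\,\dol\Phi_{(n)}=-(\dol\Phi_{(n)}^*)\,\Phi_{(n)}=0$, which kills the first term as well.
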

\begin{proof}
By \eqref{holn} and \eqref{aholn} (and working component-wise), for any $n\geq 0$ the vector $ \Phi_{(n)}$ is holomorphic, that is $\nabla^{\dolb} \Phi_{(n)} = 0$, while the vector $ \Psi_{(n)}$ is anti-holomorphic, that is $\nabla^{\dol} \Psi_{(n)} = 0$; and by conjugation one has also 
$\nabla^{\dolb} \Psi_{(n)}^* = 0$ and $\nabla^{\dol} \Phi_{(n)}^* = 0$. Using these and the second identity in \eqref{cotb} (the twisted Leibniz rule for $X_+$) one easily computes:
$$
\dol \qpn = (X_+ \lt \qpn) \omega_+ = (X_+ \lt \Phi_{(n)})(K^2 \lt \Phi_{(n)}^*)  \omega_+ = 
q^n (X_+ \lt \Phi_{(n)}) \Phi_{(n)}^* \omega_+
$$
and, since $\Phi_{(n)}^* \Phi_{(n)} = 1$ and $X_+ \lt \Phi_{(n)}^*=0$, 
\begin{align*}
\qpn \, \dol \qpn &= 
q^n \Phi_{(n)} \left[ \Phi_{(n)}^* (X_+ \lt \Phi_{(n)}) \right] \Phi_{(n)}^* \omega_+  \nn\\
&= - q^n \Phi_{(n)} \left[ (X_+ \lt \Phi_{(n)}^*) (K^2 \lt \Phi_{(n)}) \right] \Phi_{(n)}^* \omega_+ = 0   ,
\end{align*}
that is the stated \eqref{antisf}. The second equation \eqref{sf} is shown in a similar manner. 
\end{proof}

\begin{lemm}\label{lemprojbis}
With the two-dimensional calculus on the projective line $\pq$ given in Sect.~\ref{se:cals2}, 
if $\qpn$ is the projection given in \eqref{qpron}, it holds that
\beq\label{pdpdp-}
\qpn \,\dd \qpn \wedge\, \dd \qpn = -q^{n+1} [n] \, \qpn \, \omega_{-}\wedge\omega_{+}   .
\eeq
While if $\qpp$ is the projection given in \eqref{qpro}, it holds that
\beq\label{pdpdp+}
\qpp\,\dd \qpp \wedge\, \dd \qpp\, = q^{ - \mn + 1} [ \mn ] \, \qpp\, \omega_{-}\wedge\omega_{+}   .
\eeq
\end{lemm}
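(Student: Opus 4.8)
The plan is to use Lemma~\ref{lemproj} together with the factorised form of the projections. Consider first the case $n\geq 0$. From \eqref{antisf} we know $\qpn\,\dol\qpn = 0$, hence $\qpn\,\dd\qpn = \qpn\,\dolb\qpn$. Using $\qpn = \Phi_{(n)}\Phi_{(n)}^*$ and $\Phi_{(n)}^*\Phi_{(n)} = 1$ from \eqref{id2}, together with the fact that $\Phi_{(n)}$ is holomorphic ($\nabla^{\dolb}\Phi_{(n)}=0$, i.e. $X_-\lt\Phi_{(n)}=0$) while $\nabla^{\dolb}\Phi_{(n)}^*\neq 0$ in general, I would write out $\dolb\qpn = (X_-\lt\qpn)\,\omega_-$ via the twisted Leibniz rule for $X_-$ (second identity in \eqref{cotb}). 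Because $X_-\lt\Phi_{(n)}=0$ and $K^2\lt\Phi_{(n)}^* = q^{-n}\Phi_{(n)}^*$ (the components of $\Phi_{(n)}^*$ lie in $\cl_{-n}$), this collapses to $\dolb\qpn = q^{-n}\,\Phi_{(n)}\,(X_-\lt\Phi_{(n)}^*)\,\omega_-$.

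The next step is to compute $\qpn\,\dd\qpn\wedge\dd\qpn = \qpn\,\dolb\qpn\wedge\dolb\qpn$. One factor of $\qpn\,\dolb\qpn$ again uses $\qpn=\Phi_{(n)}\Phi_{(n)}^*$, and the wedge of two copies produces a term $(X_-\lt\Phi_{(n)}^*)\,\Phi_{(n)}\wedge(\ldots)$. Differentiating the identity $\Phi_{(n)}^*\Phi_{(n)}=1$ with $\dolb$ gives $(X_-\lt\Phi_{(n)}^*)(K^2\lt\Phi_{(n)}) + (X_-\lt(K^{-2}\Phi_{(n)}^*))(X_-\lt\Phi_{(n)})\,(\ldots)$ — more directly, since $X_-\lt\Phi_{(n)}=0$, the Leibniz rule for $X_-$ applied to $\Phi_{(n)}^*\Phi_{(n)}=1$ forces $(X_-\lt\Phi_{(n)}^*)\,\Phi_{(n)}=0$ after accounting for the $K^2$-twist. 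Substituting this I would reduce $\qpn\,\dolb\qpn\wedge\dolb\qpn$ to a scalar multiple of $\qpn\,\omega_-\wedge\omega_+$, where the scalar comes from re-expressing $\dolb\,\dol$-type curvature data. Concretely, I expect the cleanest route is to recognise that, via the connection picture of Sect.~\ref{se:con}, the two-form $\Phi_{(n)}^*\,\dd\qpn\wedge\dd\qpn\,\Phi_{(n)}$ is precisely the curvature $\fsf_{-n}$ of the line bundle $\cl_{-n}$ — or rather $\cl_n$ with the sign conventions here — whose value was computed to be $-q^{n+1}[n]\,\omega_-\wedge\omega_+$. Pre- and post-composing with $\Phi_{(n)}$, $\Phi_{(n)}^*$ and using $\Phi_{(n)}^*\Phi_{(n)}=1$ then yields \eqref{pdpdp-}.

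For the case $n\leq 0$ the argument is entirely parallel: now \eqref{sf} gives $\qpp\,\dolb\qpp=0$, so $\qpp\,\dd\qpp = \qpp\,\dol\qpp$, and one uses that $\Psi_{(n)}$ is anti-holomorphic ($X_+\lt\Psi_{(n)}=0$) together with $\Psi_{(n)}^*\Psi_{(n)}=1$ from \eqref{id1} and the $K^2$-twist in the Leibniz rule for $X_+$. The bundle curvature computed in Sect.~\ref{se:con} is $\fsf_m = -q^{m+1}[m]\,\omega_-\wedge\omega_+$; for the relevant degree $m=-\mn$ (the label of $\cl_{-\mn}$, noting the sign-flip convention mentioned before \eqref{qpron}), and accounting for the opposite ordering of $\dol$ versus $\dolb$ (which flips the sign through $\omega_-\wedge\omega_+ = -q^{-2}\omega_+\wedge\omega_-$ in the relevant normalisation), this produces the coefficient $+q^{-\mn+1}[\mn]$ of \eqref{pdpdp+}.

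The main obstacle I anticipate is bookkeeping of the twists and signs: keeping straight (i) the $q$-power from $K^2$ acting on $\Phi_{(n)}^*$ versus $\Phi_{(n)}$, (ii) the sign and $q^{\pm 2}$ factor from reordering $\omega_-\wedge\omega_+$ versus $\omega_+\wedge\omega_-$, and (iii) which of $X_+\lt$, $X_-\lt$ annihilates which of $\Phi_{(n)}$, $\Phi_{(n)}^*$, $\Psi_{(n)}$, $\Psi_{(n)}^*$. The structural input — that the "sandwiched" two-form is the line-bundle curvature $\fsf$, already computed to be a scalar multiple of $\omega_-\wedge\omega_+$, and that $\Phi^*\Phi=1$ collapses the sandwich — is what makes the computation finite; the rest is careful tracking of the deformation parameter, which I would verify by specialising to the case $\abs{n}=1$ where the projection is the defining $2\times 2$ one.
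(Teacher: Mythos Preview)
Your direct-computation route has a real slip: from $\qpn\,\dol\qpn=0$ you correctly get $\qpn\,\dd\qpn=\qpn\,\dolb\qpn$, but you cannot also replace the \emph{second} factor $\dd\qpn$ by $\dolb\qpn$, since there is no $\qpn$ immediately to its left. The quantity $\qpn\,\dolb\qpn\wedge\dolb\qpn$ you propose to compute is identically zero (each factor carries an $\omega_-$ and $\omega_-\wedge\omega_-=0$). The correct reduction, as in \eqref{T}, is
\[
\qpn\,\dd\qpn\wedge\dd\qpn \;=\; \qpn\,\dolb\qpn\wedge\dol\qpn ,
\]
and the scalar sitting in the middle is therefore $(X_-\lt\Phi_{(n)}^*)(X_+\lt\Phi_{(n)})$, not $(X_-\lt\Phi_{(n)}^*)\,\Phi_{(n)}$; your identity $(X_-\lt\Phi^*)\Phi=0$ is true but plays no role. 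The paper evaluates this scalar by applying first $X_+$ and then $X_-$ to $\Phi_{(n)}^*\Phi_{(n)}=1$ via the twisted Leibniz rule, together with the direct computation $X_-\,X_+\lt\Phi_{(n)}=q^{1-n}[n]\,\Phi_{(n)}$, and this is where the coefficient $-q^{n+1}[n]$ actually appears. (A smaller point: in your expression for $\dolb\qpn$ the prefactor $q^{-n}$ is spurious --- the $K^2$-twist in the Leibniz rule lands on the factor multiplied by $X_-\lt\Phi_{(n)}=0$, so the surviving term is just $\Phi_{(n)}(X_-\lt\Phi_{(n)}^*)\,\omega_-$.)

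Your alternative curvature route is a genuinely different and conceptually attractive idea: identify $\qpn\,\dd\qpn\wedge\dd\qpn$ with the curvature of the Grassmann connection $\qpn\circ\dd$ on $\qpn(\Apq)^{n+1}$, and then read off the answer from $\fsf = -q^{n+1}[n]\,\omega_-\wedge\omega_+$ computed in \S\ref{se:con}. What is missing to turn this into a proof is the identification, under the module isomorphism $\cl_m\simeq\qpn(\Apq)^{n+1}$, of the Grassmann connection $\qpn\circ\dd$ with the connection $\nabla$ of \eqref{coder2d}; this is true (it is essentially the strong-connection picture of \cite{HM98}), but it is neither stated nor proved in the paper, and you would also need to pin down the correct label $m$ rather than leaving it to ``sign conventions''. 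The paper avoids all of this by the self-contained Leibniz computation above; your curvature shortcut buys conceptual clarity at the cost of importing an extra (though standard) ingredient.
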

\begin{proof}
We need to compute te quantity $\qpn \, \dolb \qpn$. Just as for Lemma~\ref{lemproj}, one has
$$
\dolb \qpn = (X_- \lt \qpn) \omega_- = \Phi_{(n)} (X_- \lt \Phi_{(n)}^*) \omega_-
$$
and in turn, using the fact that $\Phi_{(n)}^* \Phi_{(n)}=1$,  one gets:
$$
\qpn \, \dolb \qpn = \Phi_{(n)} \Phi_{(n)}^* \Phi_{(n)} (X_- \lt \Phi_{(n)}^*) \omega_- = 
\Phi_{(n)} (X_- \lt \Phi_{(n)}^*) \omega_-   .
$$
Finally, 
\begin{align*}
\qpn \,\dd \qpn \wedge\, \dd \qpn = \qpn \,\dolb \qpn \wedge\, \dol \qpn
& = q^n \Phi_{(n)} (X_- \lt \Phi_{(n)}^*) \omega_- (X_+ \lt \Phi_{(n)}) \Phi_{(n)}^* \omega_+ \\
& = q^{n+2} \Phi_{(n)} \left[ (X_- \lt \Phi_{(n)}^*)(X_+ \lt \Phi_{(n)}) \right] \Phi_{(n)}^* \, 
\omega_- \wedge \omega_+   .
\end{align*}  
For the term in the square bracket, using the twisted Leibniz rule for $X_+$ and $X_-$ and the holomorphic properties of the vector $\Phi_{(n)}$ one computes:
\begin{align*}
0 & = X_+ \lt 1 = X_+ \lt ( \Phi_{(n)}^* \, \Phi_{(n)}) = \Phi_{(n)}^* (X_+ \lt \Phi_{(n)}) \\[4pt]
\Rightarrow \quad 0 & = X_- \lt X_+ \lt 1 = X_- \lt (\Phi_{(n)}^* (X_+ \lt \Phi_{(n)}))  \\
& = \Phi_{(n)}^* X_- \lt (X_+ \lt \Phi_{(n)}) + (X_- \lt \Phi_{(n)}^*) (K^2 \lt X_+ \lt \Phi_{(n)}) \\
& = q^{1-n} \big( [n] + q \, (X_- \lt \Phi_{(n)}^*) (X_+ \lt \Phi_{(n)}) \big)
\end{align*}
since $K^2 \lt X_+ \lt \Phi_{(n)} = q^{2-n} X_+ \lt \Phi_{(n)}$ and 
$X_- \lt (X_+ \lt \Phi_{(n)}) = q^{1-n} [n]\Phi_{(n)}$ by a direct if lengthy computation. By substituting in the square bracket above one gets the equality \eqref{pdpdp-}. The result 
 \eqref{pdpdp+} is shown along similar lines. 
\end{proof}

Finally, it holds that \cite[Lemma~5.1]{LRZ09} that (recall the change in notation $n\to-n$) 
$$
\qtr \qpn\ = q^{-n}  \quad \mbox{and} \quad \qtr \qpp\ = q^{\mn}   .
$$
This, together with \eqref{pdpdp-} and \eqref{pdpdp-} then leads to the computations:
\begin{align*}
\mbox{Top}_\sigma(\qpn) &= \frac{1}{2} \int_h \qtr(\qpn\, \dd \qpn \wedge \qpn \qp) 
= -q [n] \, \int_h \, \omega_{-}\wedge\omega_{+} = -q \, [n]   ,  
\nn \\[4pt]
\mbox{and} \qquad 
\mbox{Top}_\sigma(\qpp) &= \frac{1}{2} \int_h \qtr(\qpp\, \dd \qpp \wedge \qpp \qp) 
= q [\mn] \, \int_h \, \omega_{-}\wedge\omega_{+} = q \, [\mn]   .
\end{align*}
Notice the agreement with the sign rule in \eqref{sign} for self-dual or anti-self-dual solutions in \eqref{antisf} or \eqref{sf}. 
As mentioned, in \cite{NT05,wa07} these were given as the $q$-index of the Dirac operator on the quantum projective line $\pq$, thus giving them a `topological' meaning. 

The results of the present paper ought to be directly generalizable to the quantum projective spaces $\pqn$, 
using the complex and holomorphic structures on $\pqn$ given in \cite{KM11,KM11b} 
that extends those introduced in \cite{KLvS}. Higher dimensional twisted sigma-model (anti)-instantons are to be found among the projections on $\pqn$ constructed in \cite{DL10,DD10}.


\end{document}